\definecolor{added}{rgb}{0, 0, 1}
\definecolor{deleted}{rgb}{1, 0, 0}
\newtheorem{theorem}{Theorem}[section]
\newtheorem{lemma}[theorem]{Lemma}
\newtheorem{remark}[theorem]{Remark}
\newtheorem{corollary}[theorem]{Corollary}
\newcommand{\sect}[1]{\section{#1} \setcounter{equation}{0} }
\newcounter{ca}
\newcommand{\norm}[2]{\left\|#1\right\|_{#2}}
\newcommand{\Pn}{\mathbb P_n}
 \newcommand{\ec}{\end{comment}}
\newcommand{\bc}{ \begin{comment}
 }
\newcommand{\calS}{{\mathcal S}}
\newcommand{\I}{{\mathcal I}}
\newcommand\w{{\omega}}
\def\be  {\begin{equation}}
\def\ee  {\end{equation}}
\def\ba  {\begin{eqnarray}}
\def\ea  {\end{eqnarray}}
\def\baa {\begin{eqnarray*}}
\def\eaa {\end{eqnarray*}}
\newenvironment{comment}[2]
{\bgroup\vspace{7pt}
\begin{tabular}{|p{5in}|}
\hline \qquad \bf \footnotesize Comment -- to be deleted in the final version \\
\hline
\quad\sl\footnotesize #1#2} {\\ \hline \end{tabular}
\vspace{7pt}\indent\egroup}
\def\updots{\mathinner{\mkern
1mu\raise 1pt \hbox{.}\mkern 2mu \mkern 2mu \raise
4pt\hbox{.}\mkern 1mu \raise 7pt\vbox {\kern 7 pt\hbox{.}}} }
\newcommand{\B}{\mathbb B}
\newcommand{\N}{\mathbb N}
\renewcommand{\a}{\alpha}
\renewcommand{\b}{\beta}
\newcommand{\ineq}[1]{{\rm(\ref{#1})}}
\newcommand{\ie}{{\em i.e., }}
\newcommand{\eg}{{\em e.g. }}
\newcommand{\bpic}{
\begin{center}
}
\newcommand{\epic}{
\endpspicture
\end{center}
}
\renewcommand{\L}{L}
\newcommand{\Lp}{\L_p}
\newcommand{\Poly}{\mathbb P}
\newcommand{\wkr}{\w_{k,r}^\varphi}
\newcommand{\okr}{\Omega_{k,r}^\varphi}
\newcommand{\wokr}{\widetilde{\Omega}_{k,r}^\varphi}
\newcommand{\bwkr}{\Psi_{k,r}^\varphi}
 \newcommand{\Dom}{{\mathfrak{D}}}
\newcommand{\ddelta}{\mu(\delta)}
 \newcommand{\wt}{{\mathcal{W}}}
\newcommand{\ccc}{\boldsymbol \varrho }
\newcommand{\thm}[1]{Theorem~\ref{#1}}
\newcommand{\lem}[1]{Lemma~\ref{#1}}
\newcommand{\cor}[1]{Corollary~\ref{#1}}
 \newcommand{\Lpab}{\L_p^{\alpha,\beta}}
 \newcommand{\wab}{w_{\a,\b}}
 \newcommand{\weight}{\wt_{kh}^{r/2+\a,r/2+\b}}
\title{{\sc On some properties of moduli of smoothness with Jacobi weights}
\thanks{{\it AMS classification:} 41A10, 41A17, 41A25. {\it Keywords
and phrases:} Approximation by polynomials in weighted $L_p$-norms, Jacobi weights, moduli of smoothness. }}
\author{K. A.  Kopotun\thanks{Department of Mathematics, University of
Manitoba, Winnipeg, Manitoba, R3T 2N2, Canada ({\tt
kirill.kopotun@umanitoba.ca}). Supported by NSERC of Canada Discovery Grant RGPIN \mbox{04215-15.}} ,
D. Leviatan\thanks{Raymond and Beverly Sackler School of Mathematical
Sciences, Tel Aviv University, Tel Aviv 6139001, Israel ({\tt
leviatan@post.tau.ac.il}).}\ \ and I. A. Shevchuk\thanks
{Faculty of Mechanics and Mathematics, National Taras
Shevchenko University of Kyiv, 01033 Kyiv, Ukraine ({\tt
shevchuk@univ.kiev.ua}).} }
\begin{document}

\maketitle

\centerline{\sl\small Dedicated to the memory of our friend, colleague and collaborator}
\centerline{\sl\small Yingkang Hu (July 6, 1949 -- March 11, 2016)}

\abstract{
We discuss some properties of the moduli of smoothness with Jacobi weights that we have recently introduced and that are defined as
\[
\wkr(f^{(r)},t)_{\a,\b,p}   :=\sup_{0\leq h\leq t}\norm{ \weight(\cdot)
\Delta_{h\varphi(\cdot)}^k (f^{(r)},\cdot)}p
\]
where $\varphi(x) = \sqrt{1-x^2}$, $\Delta_h^k(f,x)$ is the $k$th symmetric difference of $f$ on $[-1,1]$,
\[
\wt_\delta^{\xi,\zeta} (x):=  (1-x-\delta\varphi(x)/2)^\xi
(1+x-\delta\varphi(x)/2)^\zeta ,
\]
and $\a,\b > -1/p$ if $0<p<\infty$, and $\a,\b \geq 0$ if $p=\infty$.

We show, among other things, that for all $m, n\in \N$, $0<p\le \infty$,    polynomials $P_n$ of degree $<n$ and sufficiently small $t$,
\begin{align*}
 \w_{m,0}^\varphi(P_n, t)_{\a,\b,p} & \sim t \w_{m-1,1}^\varphi(P_n', t)_{\a,\b,p} \sim \dots \sim t^{m-1}\w_{1,m-1}^\varphi(P_n^{(m-1)}, t)_{\a,\b,p} \\
 & \sim t^m \norm{\wab \varphi^{m} P_n^{(m)}}{p} ,
\end{align*}
where $\wab(x)  = (1-x)^\a (1+x)^\b$ is the usual Jacobi weight.

In the spirit of Yingkang Hu's work, we   apply this to characterize the behavior of the polynomials of best approximation of a function in a Jacobi weighted $L_p$ space, $0<p\le\infty$.   Finally we discuss sharp Marchaud and Jackson type inequalities in the case $1<p<\infty$.

 }

\sect{Introduction}

Recall that the Jacobi weights are defined as $\wab(x):=(1-x)^\a(1+x)^\b $, where parameters $\a$ and $\b$ are usually assumed to be such   that $\wab \in \Lp[-1,1]$, \ie
\[
 \a,\b \in
J_p
 := \begin{cases}
(-1/p, \infty), & \mbox{\rm if } 0<p<\infty , \\
[0,\infty), & \mbox{\rm if } p=\infty .
\end{cases}
\]
We   denote by $\Pn$ the set of all algebraic polynomials of degree  $\le n-1$, and
$\Lpab (I):=\left\{f\mid \norm{\wab f}{\Lp(I)}<\infty \right\}$, where  $I\subseteq [-1,1]$. For convenience, if $I=[-1,1]$ then we omit $I$ from the notation. For example,
$\norm{\cdot}{p}:= \norm{\cdot}{\Lp[-1,1]}$, $\Lpab := \Lpab [-1,1]$,  etc.

Following \cite{sam} we denote $\B^0_p(\wab) := \Lpab$, and
\[
\B_p^r(\wab):=\left\{\,f\,|\,f^{(r-1)}\in AC_{loc} \quad\text{and}\quad\varphi^rf^{(r)}\in \Lpab \right\}, \quad r\ge 1,
\]
where $AC_{loc}$ denotes the set of  functions which are locally absolutely continuous in $(-1,1)$, and $\varphi(x):= \sqrt{1-x^2}$.
Also (see \cite{sam}), for $k,r\in\N$ and
$f\in \B^r_p(\wab)$,  let
\begin{align} \label{wkrdefinition}
\wkr(f^{(r)},t)_{\a,\b,p} & :=\sup_{0\leq h\leq t}\norm{ \weight(\cdot)
\Delta_{h\varphi(\cdot)}^k (f^{(r)},\cdot)}p \\ \nonumber
& =
\sup_{0<h\leq t}\norm{\weight(\cdot)
\Delta_{h\varphi(\cdot)}^k(f^{(r)},\cdot)}{L_p(\Dom_{kh})} ,
\end{align}
where
\[
\Delta_h^k(f,x; S):=\left\{
\begin{array}{ll}
\sum_{i=0}^k  \binom{k}{i}
(-1)^{k-i} f(x-\frac{kh}{2}+ih),&\mbox{\rm if }\, [x-\frac{kh}{2}, x+\frac{kh}{2}]  \subseteq S \,,\\
0,&\mbox{\rm otherwise},
\end{array}\right.
\]
is the $k$th symmetric difference,  $\Delta_h^k(f,x) := \Delta_h^k(f,x; [-1,1])$,
\[
\wt_\delta^{\xi,\zeta} (x):=  (1-x-\delta\varphi(x)/2)^\xi
(1+x-\delta\varphi(x)/2)^\zeta,
\]
and
\begin{align*}
\Dom_\delta &:= 
 [-1+\ddelta,1-\ddelta], \quad \ddelta:=2\delta^2/(4+\delta^2)
\end{align*}
(note that ${\Delta}_{h\varphi(x)}^k(f,x)=0$   if $x\not\in\Dom_{kh}$).

We define the main part weighted modulus of smoothness as
\be \label{mainpart}
\okr(f^{(r)}, A, t)_{\a, \b, p} :=
\sup_{0\leq h\leq t}\norm{ \wab(\cdot) \varphi^r(\cdot)
\Delta_{h\varphi(\cdot)}^k (f^{(r)},\cdot; \I_{A,h})}{\Lp(\I_{A,h})} ,
\ee
 where $\I_{A,h} := [-1+Ah^2, 1-Ah^2]$  and $A > 0$.

We also denote
\begin{align} \label{bwkrdefinition}
\bwkr(f^{(r)},t)_{\a,\b,p} 
& :=\sup_{0\leq h\leq t}\norm{ \wab(\cdot) \varphi^r(\cdot)
\Delta_{h\varphi(\cdot)}^k (f^{(r)},\cdot)}p , 
\end{align}
\ie   $\bwkr$ is ``the main part modulus $\okr$ with $A=0$''. However, we want to emphasize that while $\okr(f^{(r)}, A, t)_{\a, \b, p}$ with $A>0$  and $\wkr(f^{(r)},t)_{\a,\b,p}$ are bounded for all $f\in \B^r_p(\wab)$ (see \cite{sam}*{Lemma 2.4}), modulus $\bwkr(f^{(r)},t)_{\a,\b,p}$ may be infinite for such functions  (for example, this is the case for $f$  such that $f^{(r)}(x) = (1-x)^{-\gamma}$ with  $1/p\le \gamma < \a+r/2+1/p$).

\begin{remark}
We note that the main part modulus is sometimes defined with the difference inside the norm not restricted to $\I_{A,h}$, \ie
\be \label{dtmainpart}
  \wokr(f^{(r)}, A, t)_{\a, \b, p} :=
\sup_{0\leq h\leq t}\norm{ \wab(\cdot) \varphi^r(\cdot)
\Delta_{h\varphi(\cdot)}^k (f^{(r)},\cdot)}{\Lp(\I_{A,h})} .
\ee
Clearly, $\okr(f^{(r)}, A, t)_{\a, \b, p} \le \wokr(f^{(r)}, A, t)_{\a, \b, p}$.
Moreover,   we have an estimate  in the opposite direction as well if we replace $A$ with a larger constant $A'$. For example,
$\wokr(f^{(r)}, A', t)_{\a, \b, p} \le \okr(f^{(r)}, A, t)_{\a, \b, p}$, where $A' = 2\max\{A, k^2\}$ $($see \ineq{tmp1}$)$. At the same time, if $A$ is so small that $\Dom_{kh} \subset \I_{A,h}$ (for example, if $A\le k^2/4$), then
 $\wokr(f^{(r)}, A, t)_{\a, \b, p} = \bwkr(f^{(r)},t)_{\a,\b,p}$. Hence, all our results in this paper are valid with the modulus \ineq{mainpart} replaced by \ineq{dtmainpart} with an additional assumption that $A$ is sufficiently large $($assuming that $A\ge 2k^2$ will do$)$.
\end{remark}


Throughout this paper, we use the notation
\[
q := \min\{1, p\} ,
\]
and   $\ccc$ stands for some sufficiently small positive constant depending only on
$\a$, $\b$, $k$ and $q$, and independent of $n$, to be prescribed in the proof of Theorem \ref{mainthm}.

\sect{The main result}

The following theorem is our main result.

\begin{theorem} \label{mainthm}
Let $k, n\in \N$, $r\in\N_0$, $A> 0$,  $0<p\le \infty$,   $\a+r/2, \b+r/2 \in  J_p$, and let  $0<t \leq \ccc  n^{-1}$, where $\ccc$ is some positive constant that depends only on $\a$, $\b$, $k$ and $q$.
 Then, for any $P_n\in\Pn$,
\begin{align}  \label{main}
\wkr(P_n^{(r)}, t)_{\a,\b,p} & \sim   \bwkr(P_n^{(r)}, t)_{\a,\b,p}   \sim \okr(P_n^{(r)}, A, t)_{\a, \b, p}\\ \nonumber
& \sim  t^{k} \norm{\wab \varphi^{k+r} P_n^{(k+r)}}{p} ,
\end{align}
where the equivalence constants depend only on $k$, $r$, $\a$, $\b$, $A$ and $q$.
\end{theorem}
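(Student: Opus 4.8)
The plan is to prove the chain of equivalences in \eqref{main} by establishing a small number of one-directional inequalities that together close the loop. Since $\okr(\cdot,A,t) \le \wkr(\cdot,t)$ and $\okr(\cdot,A,t)\le \bwkr(\cdot,t)$ are essentially trivial from the definitions (the difference is restricted to a smaller set and the weight $\wab\varphi^r$ is comparable to $\weight$ away from the endpoints), the real content is: (i) an upper bound $\wkr(P_n^{(r)},t)_{\a,\b,p} \lesssim t^k \norm{\wab\varphi^{k+r}P_n^{(k+r)}}{p}$; (ii) an upper bound $\bwkr(P_n^{(r)},t)_{\a,\b,p}\lesssim t^k\norm{\wab\varphi^{k+r}P_n^{(k+r)}}{p}$ (this is the delicate one, since $\bwkr$ can be infinite in general and finiteness must come from $P_n$ being a polynomial plus the restriction $t\le\ccc n^{-1}$); and (iii) a lower bound $t^k\norm{\wab\varphi^{k+r}P_n^{(k+r)}}{p} \lesssim \okr(P_n^{(r)},A,t)_{\a,\b,p}$. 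Once (i)--(iii) hold, the squeeze $\okr\le\wkr\le C t^k\|\cdots\|_p\le C'\okr$ and $\okr\le\bwkr\le Ct^k\|\cdots\|_p\le C'\okr$ finishes everything.

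For the upper bounds (i) and (ii), I would use the standard identity expressing the $k$th symmetric difference as a $k$-fold iterated integral of $f^{(k+r)}$: for $x\in\Dom_{kh}$,
\[
\Delta_{h\varphi(x)}^k(P_n^{(r)},x) = \int_{-h\varphi(x)/2}^{h\varphi(x)/2}\!\!\cdots\int_{-h\varphi(x)/2}^{h\varphi(x)/2} P_n^{(k+r)}\!\Big(x+\sum_{i=1}^k u_i\Big)\,du_1\cdots du_k .
\]
Taking absolute values, one needs to control $\norm{\weight(\cdot)\int\cdots\int |P_n^{(k+r)}(\cdot+\sum u_i)|\,d\vec u}{p}$ by $(h\varphi(\cdot))^k$ times $\norm{\wab\varphi^{k+r}P_n^{(k+r)}}{p}$. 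The key technical facts are: first, for $|u_i|\le h\varphi(x)/2$ and $x\in\Dom_{kh}$, the point $y=x+\sum u_i$ stays in $[-1,1]$ with $\varphi(y)\sim\varphi(x)$ and $1\pm y\sim 1\pm x$ (so $\wab(y)\varphi^r(y)\sim\weight(x)$, giving the passage from $\weight(x)$ to $\wab(y)\varphi^r(y)$); and second, a Fubini/Minkowski-type argument (for $p\ge1$) or the $q$-inequality (for $p<1$) to move the $L_p$ norm inside the multiple integral. For $p<1$ one should not integrate directly but rather pass through a discretized/finite-difference version, or invoke the Minkowski-type inequality for $0<p<1$ available only after splitting the integral into $O(1)$ pieces on which things are monotone; this is where the hypothesis $t\le\ccc n^{-1}$ enters, via a Bernstein/Nikolskii–Remez-type inequality for polynomials on intervals of length $\sim n^{-1}$, which lets me replace $P_n^{(k+r)}(y)$ by $P_n^{(k+r)}(x)$ up to a constant on the relevant range (so that the multiple integral just produces the factor $(h\varphi(x))^k$ cleanly). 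The same Remez–Bernstein ingredient also shows $\bwkr$ is finite and comparable to the main part, since near the endpoints the extra contribution over $\Dom_{kh}\setminus\I_{A,h}$ is an interval of length $O(h^2)=O(n^{-2})$ on which polynomial values are controlled.

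For the lower bound (iii), the idea is the converse: recover $\norm{\wab\varphi^{k+r}P_n^{(k+r)}}{p}$ from the difference. Here I would fix a convenient $h\sim t$ and use that $\Delta_{h\varphi(x)}^k(P_n^{(r)},x)$, after dividing by $(h\varphi(x))^k$, is a good pointwise approximation of $P_n^{(k+r)}(x)$ for $x\in\I_{A,h}$ provided $A$ is a fixed constant and $h\le\ccc n^{-1}$ — again by the Remez–Bernstein inequality on intervals of length $\sim n^{-1}$, which controls the oscillation of $P_n^{(k+r)}$ over $[x-kh\varphi(x)/2,\,x+kh\varphi(x)/2]$ by a small fraction of $\|\wab\varphi^{k+r}P_n^{(k+r)}\|_{L_p(\text{local})}$ when $\ccc$ is small enough. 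Summing (integrating) the local estimates over a partition of $\I_{A,h}$ and using that $A h^2 = O(n^{-2})$ chops off only a negligible piece near $\pm1$ (again controlled by a Remez-type bound), one gets $t^k\norm{\wab\varphi^{k+r}P_n^{(k+r)}}{p}\le C\,\okr(P_n^{(r)},A,t)_{\a,\b,p}$.

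The main obstacle I anticipate is the $0<p<1$ case of the upper bounds: Minkowski's integral inequality fails, so moving the $L_p$-quasinorm inside the $k$-fold integral representation of the symmetric difference is not automatic. The standard remedy — discretizing the difference and using the $q$-triangle inequality together with a Bernstein-type inequality valid on each subinterval of length comparable to $n^{-1}$ — is exactly what forces the restriction $t\le\ccc n^{-1}$ with $\ccc=\ccc(\a,\b,k,q)$ small, and getting the dependence of constants right (only on $k,r,\a,\b,A,q$) requires care in how many subintervals one uses and how the weights $\weight$, $\wab\varphi^r$, $\wab\varphi^{k+r}$ are compared. A secondary technical point is the comparison $\weight(x)\sim\wab(y)\varphi^r(y)$ uniformly for $y$ in the difference stencil and $x\in\Dom_{kh}$, which needs the precise form of $\mu(\delta)$ and the shifted weight $\wt_\delta^{\xi,\zeta}$; this is routine but must be done honestly near the endpoints.
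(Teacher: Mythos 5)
Your overall architecture (two easy order relations among the three moduli, plus an upper bound of $\bwkr(P_n^{(r)},t)_{\a,\b,p}$ by $t^k\norm{\wab\varphi^{k+r}P_n^{(k+r)}}{p}$ and a lower bound of that quantity by $\okr$) matches the paper's, but your technical engine is different and, as written, breaks down exactly where you anticipate trouble. The paper does not use the $k$-fold iterated-integral representation at all; it uses the Hu--Liu identity \ineq{ident}, which writes $\Delta^k_{h\varphi(x)}(Q_n,x)$ as a \emph{finite sum} of terms $\frac{1}{(2i)!}\varphi^{k+2i}(x)Q_n^{(k+2i)}(x)h^{k+2i}\xi_{k+2i}^{2i}$ all evaluated at the same point $x$. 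Combined with the iterated Bernstein--Dzyadyk inequality \ineq{corbern}, each term beyond the leading one is dominated in norm by $B^{2i}\norm{\wab\varphi^{k}Q_n^{(k)}}{p}$ with $B<1$ once $h\le\ccc n^{-1}$ and $\ccc$ is small, so the $q$-triangle inequality applied to a geometric series yields both the upper bound \ineq{1} and (since the tail is at most half the leading term) the lower bound \ineq{2} in one stroke, uniformly for all $0<p\le\infty$; the Remez inequality \ineq{remez} is then needed only to pass from $\I_{A',h}$ back to $[-1,1]$. No Minkowski-type inequality, and no case split on $p$, ever enters.

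The gap in your route: for $0<p<1$ you propose to ``replace $P_n^{(k+r)}(y)$ by $P_n^{(k+r)}(x)$ up to a constant'' for $y$ in the difference stencil. No Bernstein/Remez-type inequality gives a pointwise two-sided comparison $|P(y)|\sim|P(x)|$ on an interval (take $P$ vanishing at $x$); what such inequalities give is control of $\sup_I|P|$ by $c|I|^{-1/p}\norm{P}{\Lp(I)}$, so the correct discretized argument must run through local $L_p$ averages over a covering of $\Dom_{kh}$, and the summation of those local estimates is precisely the nontrivial step you leave as a sketch. The same issue affects your lower bound (iii), where the ``small oscillation'' of $P_n^{(k+r)}$ over the stencil must be measured in a local $L_p$ sense rather than pointwise, and where you would still need a global Remez step to recover the norm over all of $[-1,1]$ from the norm over $\I_{A,h}$. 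These defects are repairable (arguments of this kind exist in the $0<p<1$ literature), but as written the proposal does not prove the theorem for $p<1$; the identity \ineq{ident} is the device that lets the paper bypass all of this.
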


The following is an immediate corollary of \thm{mainthm} by virtue of the fact that, if $\a,\b \in J_p$, then $\a+r/2, \b+r/2 \in  J_p$ for all $r\ge 0$.

\begin{corollary} \label{maincor}
Let $m, n\in \N$, $A> 0$,  $0<p\le \infty$,  $\a, \b \in  J_p$,   and let  $0<t \leq \ccc  n^{-1}$. Then, for any $P_n\in\Pn$, and any $k\in\N$ and $r\in \N_0$ such that $k+r=m$,
\begin{align*}
 t^{-k} \wkr(P_n^{(r)}, t)_{\a,\b,p} & \sim  t^{-k} \bwkr(P_n^{(r)}, t)_{\a,\b,p}   \sim t^{-k} \okr(P_n^{(r)}, A, t)_{\a, \b, p}\\
& \sim   \norm{\wab \varphi^{m} P_n^{(m)}}{p} ,
\end{align*}
where the equivalence constants depend only on $m$, $\a$, $\b$, $A$ and $q$.
\end{corollary}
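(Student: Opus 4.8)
The plan is to deduce Corollary~\ref{maincor} directly from Theorem~\ref{mainthm} by applying the latter with the fixed value $m=k+r$ in mind. First I would observe that the hypothesis of Theorem~\ref{mainthm} requires $\a+r/2,\b+r/2\in J_p$; since we are assuming $\a,\b\in J_p$ and $r\ge 0$, and $J_p$ is an interval unbounded above (namely $(-1/p,\infty)$ or $[0,\infty)$), adding the nonnegative quantity $r/2$ keeps us inside $J_p$. This is exactly the remark made in the excerpt just before the corollary, so that hypothesis is automatically satisfied for every admissible pair $(k,r)$ with $k+r=m$.

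Next, for each such pair $(k,r)$, I would invoke Theorem~\ref{mainthm} verbatim: for $0<t\le\ccc n^{-1}$ and $P_n\in\Pn$,
\begin{align*}
\wkr(P_n^{(r)}, t)_{\a,\b,p} & \sim \bwkr(P_n^{(r)}, t)_{\a,\b,p} \sim \okr(P_n^{(r)}, A, t)_{\a, \b, p}\\
& \sim t^{k}\norm{\wab \varphi^{k+r} P_n^{(k+r)}}{p}.
\end{align*}
Since $k+r=m$, the last term equals $t^{k}\norm{\wab \varphi^{m} P_n^{(m)}}{p}$. Dividing every term of the chain of equivalences by $t^{k}$ — which is legitimate because $t>0$ and multiplying an equivalence $X\sim Y$ by the fixed positive scalar $t^{-k}$ preserves it with the same constants — yields precisely
\[
t^{-k}\wkr(P_n^{(r)}, t)_{\a,\b,p} \sim t^{-k}\bwkr(P_n^{(r)}, t)_{\a,\b,p} \sim t^{-k}\okr(P_n^{(r)}, A, t)_{\a, \b, p} \sim \norm{\wab \varphi^{m} P_n^{(m)}}{p},
\]
which is the assertion of the corollary. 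The equivalence constants inherited from Theorem~\ref{mainthm} depend on $k$, $r$, $\a$, $\b$, $A$ and $q$; since $k$ and $r$ range over the finitely many decompositions $k+r=m$ with $k\in\N$, $r\in\N_0$, one may take the worst constant over these finitely many cases, so the final constants depend only on $m$, $\a$, $\b$, $A$ and $q$, as claimed. (One should also note that the constant $\ccc$ in the range $0<t\le\ccc n^{-1}$ may a priori depend on $k$ and $q$; again taking the minimum over the finitely many pairs $(k,r)$ with $k+r=m$ gives a single $\ccc$ depending only on $\a,\b,m,q$.)

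There is essentially no obstacle here: the corollary is a formal consequence of the main theorem, and the only points requiring a word of justification are (i) the membership $\a+r/2,\b+r/2\in J_p$, handled by monotonicity of the constraint, and (ii) the uniformization of the constants $\ccc$ and the equivalence constants over the finitely many admissible splittings of $m$ into $k+r$. Both are routine. The substantive content — that all these weighted moduli of a polynomial are comparable to the single quantity $t^{m}\norm{\wab \varphi^{m} P_n^{(m)}}{p}$ — lives entirely in Theorem~\ref{mainthm}, whose proof is where the real work (Dzjadyk-type inequalities, properties of the weights $\wt_\delta^{\xi,\zeta}$ on $\Dom_{kh}$, and iteration in $r$) takes place.
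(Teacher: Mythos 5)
Your proposal is correct and matches the paper's own (one-line) justification: the corollary is obtained by applying \thm{mainthm} verbatim to each admissible pair $(k,r)$ with $k+r=m$, using the observation that $\a,\b\in J_p$ forces $\a+r/2,\b+r/2\in J_p$. Your additional remarks about dividing by $t^{k}$ and taking the worst constants over the finitely many splittings of $m$ are sound and simply make explicit what the paper leaves implicit.
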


It was shown in \cite{sam}*{Corollary 1.9} that, for $k\in\N$, $r\in \N_0$, $r/2+\a \ge 0$, $r/2+\b \ge 0$, $1\le p \le \infty$, $f\in \B_p^r(\wab)$,  $\lambda \ge 1$ and all $t>0$,
\[
\wkr(f^{(r)}, \lambda t)_{\a,\b,p} \le c \lambda^k \wkr(f^{(r)},  t)_{\a,\b,p} .
\]
Hence, in the case $1\le p \le \infty$, we can strengthen \cor{maincor} for the moduli $\wkr$. Namely,
 the following result is valid.

\begin{corollary} \label{anothercor}
Let $m, n\in \N$,    $1\le p\le \infty$,  $\a, \b \in  J_p$, $\Lambda >0$  and let  $0<t \leq  \Lambda  n^{-1}$. Then, for any $P_n\in\Pn$, and any $k\in\N$ and $r\in \N_0$ such that $k+r=m$,
\[
 t^{-k} \wkr(P_n^{(r)}, t)_{\a,\b,p}
  \sim   \norm{\wab \varphi^{m} P_n^{(m)}}{p} ,
\]
where the equivalence constants depend only on $m$, $\a$, $\b$ and $\Lambda$.
\end{corollary}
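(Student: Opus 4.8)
The plan is to bootstrap \cor{maincor}, which already yields the asserted equivalence on the smaller range $0<t\le\ccc n^{-1}$, up to the range $0<t\le\Lambda n^{-1}$ using two facts valid for $1\le p\le\infty$: the elementary fact that $t\mapsto\wkr(P_n^{(r)},t)_{\a,\b,p}$ is nondecreasing (immediate from the $\sup_{0\le h\le t}$ in \ineq{wkrdefinition}), which takes care of the lower estimate for ``large'' $t$, and the weak-type inequality $\wkr(f^{(r)},\lambda t)_{\a,\b,p}\le c\lambda^k\wkr(f^{(r)},t)_{\a,\b,p}$, $\lambda\ge1$, quoted above from \cite{sam}*{Corollary 1.9}, which takes care of the upper estimate. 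Heuristically, these let one ``slide'' the equivalence from the scale $\ccc n^{-1}$ to any scale $\le\Lambda n^{-1}$, paying only constants controlled by $\Lambda/\ccc$.

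Concretely, I would fix $P_n\in\Pn$, $k\in\N$ and $r\in\N_0$ with $k+r=m$, and $0<t\le\Lambda n^{-1}$, and set $t_0:=\min\{t,\ccc n^{-1}\}$, so that $0<t_0\le\ccc n^{-1}$ and $t_0\le t$. Since $p\ge1$ we have $q=1$, hence $\ccc$ depends only on $m,\a,\b$ and $\lambda:=t/t_0\in[1,\Lambda/\ccc]$ with $\Lambda/\ccc$ depending only on $m,\a,\b,\Lambda$. If $t\le\ccc n^{-1}$ then $t_0=t$ and the assertion is exactly \cor{maincor}. If $t>\ccc n^{-1}$, then $t_0=\ccc n^{-1}$ and $t=\lambda t_0$ with $\lambda\ge1$. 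For the upper bound I would apply \cite{sam}*{Corollary 1.9} to get $\wkr(P_n^{(r)},t)_{\a,\b,p}\le c\lambda^k\wkr(P_n^{(r)},t_0)_{\a,\b,p}$, then \cor{maincor} at the admissible value $t_0$ to get $\wkr(P_n^{(r)},t_0)_{\a,\b,p}\le c_1 t_0^k\norm{\wab\varphi^m P_n^{(m)}}{p}$, and multiply: $\wkr(P_n^{(r)},t)_{\a,\b,p}\le c\,c_1\,(\lambda t_0)^k\norm{\wab\varphi^m P_n^{(m)}}{p}=c\,c_1\,t^k\norm{\wab\varphi^m P_n^{(m)}}{p}$. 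For the lower bound I would use monotonicity, $\wkr(P_n^{(r)},t)_{\a,\b,p}\ge\wkr(P_n^{(r)},t_0)_{\a,\b,p}$, then \cor{maincor} at $t_0$, $\wkr(P_n^{(r)},t_0)_{\a,\b,p}\ge c_2 t_0^k\norm{\wab\varphi^m P_n^{(m)}}{p}$, and finally $t_0=\ccc n^{-1}\ge(\ccc/\Lambda)t$, giving $\wkr(P_n^{(r)},t)_{\a,\b,p}\ge c_2(\ccc/\Lambda)^k t^k\norm{\wab\varphi^m P_n^{(m)}}{p}$. Together these bounds give the equivalence with constants depending only on $m,\a,\b,\Lambda$.

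I do not anticipate a genuine difficulty; the whole argument is a combination of already-available estimates, and the only real work is bookkeeping of constants. The points to watch are that the constant $c$ in \cite{sam}*{Corollary 1.9} can be taken independent of $p\in[1,\infty]$, depending only on $k,r,\a,\b$, and that the hypotheses of that corollary are met for the parameters at hand (automatic once $r\ge2$, since then $r/2+\a,r/2+\b>0$); since also $k,r\le k+r=m$, all dependence on $k$ and $r$ collapses to dependence on $m$, which is exactly why the estimates above yield the stated dependence on $m$, $\a$, $\b$, $\Lambda$.
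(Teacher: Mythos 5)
Your proposal is correct and is precisely the argument the paper intends: the paper states this corollary without proof immediately after quoting the inequality $\wkr(f^{(r)},\lambda t)_{\a,\b,p}\le c\lambda^k\wkr(f^{(r)},t)_{\a,\b,p}$ from \cite{sam}*{Corollary 1.9}, and the intended derivation is exactly your combination of \cor{maincor} at the scale $\ccc n^{-1}$ with that inequality (for the upper bound) and with the monotonicity of the modulus in $t$ (for the lower bound). The only caveat --- which you partially flag and which the paper itself also glosses over --- is that \cite{sam}*{Corollary 1.9} is quoted under the hypotheses $r/2+\a\ge 0$, $r/2+\b\ge 0$, which for $r\in\{0,1\}$ and $1\le p<\infty$ is slightly stronger than the stated assumption $\a,\b\in J_p$.
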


 \begin{remark}\label{rem} In the case $1\le p \le \infty$, several equivalences in \thm{mainthm} and \cor{maincor}   follow  from \cite{hl}*{Theorems 4 and 5}, since, as was shown in \cite{sam}*{(1.8)}, for $1\le p \le \infty$,
 \be \label{equival}
\wkr(f^{(r)},t)_{\a,\b,p}\sim\omega^k_\varphi( f^{(r)},t)_{w_{\a,\b}\varphi^r,p}, \quad 0<t\leq t_0 ,
\ee
where $\omega^k_\varphi( g,t)_{w,p}$ is the three-part weighted Ditzian-Totik modulus of smoothness (see \eg \cite{sam}*{(5.1)} for its definition).

Note that it is still an open problem if \ineq{equival} is valid if $0<p<1$.
\end{remark}

\begin{proof}[Proof of $\thm{mainthm}$]
The main idea of the proof is not much different from that of \cite{hl}*{Theorems 3-5}.

First, we note that it suffices to prove \thm{mainthm} in the case $r=0$. Indeed, suppose we proved that, for $k,n\in\N$, $A>0$, $0<t \le\ccc n^{-1}$, $0<p\le \infty$, $\a,\b\in J_p$ and any polynomial $Q_n\in\Pn$,
\begin{align}  \label{r0}
 \w_{k,0}^\varphi(Q_n , t)_{\a,\b,p}   & \sim   \Psi_{k,0}^\varphi(Q_n , t)_{\a,\b,p}   \sim \Omega_{k,0}^\varphi (Q_n , A, t)_{\a, \b, p} \\ \nonumber
  &\sim   t^{k } \norm{\wab \varphi^{k } Q_n^{(k )}}{p}.
\end{align}
Then, if $P_n$ is an arbitrary polynomial from $\Pn$, and   $r$ is an arbitrary natural number,
assuming that $n > r$ (otherwise, $P_n^{(r)} \equiv 0$ and there is nothing to prove) and denoting $Q := P_n^{(r)} \in \Poly_{n-r}$, we have
\[
\wkr(P_n^{(r)}, t)_{\a,\b, p} = \w_{k,0}^\varphi(Q, t)_{\a+r/2,\b+r/2, p} ,
\]
\[
\bwkr(P_n^{(r)}, t)_{\a,\b, p} = \Psi_{k,0}^\varphi(Q, t)_{\a+r/2,\b+r/2, p} ,
\]
\[
\okr(P_n^{(r)}, t)_{\a,\b, p} = \Omega_{k,0}^\varphi(Q, A, t)_{\a+r/2,\b+r/2, p}
\]
and
\[
  \norm{\wab \varphi^{k+r} P_n^{(k+r)}}{p} =  \norm{\w_{\a+r/2, \b+r/2} \varphi^k  Q^{(k)}}{p} ,
\]
and so \ineq{main} follows from \ineq{r0} with $\a$ and $\b$ replaced by $\a+r/2$ and $\b+r/2$, respectively.

Now, note that it immediately follows from the definition that
\[
\w_{k,0}^\varphi(g , t)_{\a,\b,p} \le \Psi_{k,0}^\varphi(g,  t)_{\a, \b, p} .
\]
Also, for $A > 0$,
\[
\Omega_{k,0}^\varphi(g, A, t)_{\a, \b, p}  \le c \w_{k,0}^\varphi(g,  t)_{\a, \b, p} ,
\]
since $\wab(x) \le c \wt_{kh}^{\a,\b}(x)$ for $x$ such that $x\pm kh\varphi(x)/2 \in \I_{A,h}$.

Hence, in order to prove \ineq{r0}, it suffices to show that
\be \label{1}
\Psi_{k,0}^\varphi (Q_n, t)_{\a,\b,p}  \le c t^{k} \norm{\wab \varphi^{k} Q_n^{(k)}}{p}
\ee
and
\be \label{2}
t^{k} \norm{\wab \varphi^{k} Q_n^{(k)}}{p}  \le c \Omega_{k,0}^\varphi(Q_n , A, t)_{\a, \b, p}  .
\ee

Recall the following Bernstein-Dzyadyk-type inequality that follows from \cite{hl}*{(2.24)}: if $0<p\le \infty$,  $\a,\b \in J_p$ and $P_n\in\Pn$, then
\[
\norm{\wab \varphi^s P_n'}{p} \leq c ns \norm{\wab \varphi^{s-1} P_n}{p},   \quad 1\le s \le n-1 ,
\]
where $c$ depends only on $\a$,  $\b$ and $q$,
and is independent of $n$ and $s$.

This implies that, for any $Q_n \in \Pn$ and $k,j \in \N$,
\be \label{corbern}
\norm{\wab \varphi^{k+j}  Q_n^{(k+j)}}{p} \le (c_0  n)^j \frac{ (k+j)!}{k!}   \norm{\wab \varphi^{k}  Q_n^{(k)}}{p}, \quad 1\le k+j \le n-1 .
\ee


We now use the following identity (see \cite{hl}*{(2.4)}):
\begin{quote}
for any $Q_n\in\Pn$ and $k\in\N$, we have
\be \label{ident}
\Delta^k_{h\varphi(x)}(Q_n, x) = \sum_{i=0}^{K} \frac{1}{(2i)!}    \varphi^{k+2i}(x) Q_n^{(k+2i)}(x) h^{k+2i} \xi_{k+2i}^{2i} ,
\ee
where $K := \lfloor (n-1-k)/2 \rfloor$, and
 $\xi_j \in (-k/2, k/2)$ depends only on $k$ and $j$.
\end{quote}
Applying \ineq{corbern}, we obtain, for $0\le i \le K$ and $0<h\le t \le \ccc n^{-1}$,
\begin{align*}
\norm{\frac{1}{(2i)!} \wab \varphi^{k+2i}  Q_n^{(k+2i)}  }{p} h^{2i} |\xi_{k+2i}|^{2i} & \le   (c_0 \ccc k/2)^{2i} \frac{(k+2i)!}{(2i)! k!}  \norm{\wab \varphi^{k}  Q_n^{(k)}}{p} \\
& \le [c_0 \ccc k(k+1)/2]^{2i}    \norm{\wab \varphi^{k}  Q_n^{(k)}}{p} \\
& \le B^{2i} \norm{\wab \varphi^{k}  Q_n^{(k)}}{p} ,
\end{align*}
where we used the estimate $(k+2i)!/((2i)! k!) \le (k+1)^{2i}$, and where $\ccc$ is taken so small that the last estimate holds with $B:= (1/3)^{1/(2q)}$. Note that
 $\sum_{i=1}^\infty B^{2iq} = 1/2$.

Hence, it follows from \ineq{ident} that
\begin{align*}
\norm{\wab   \Delta^k_{h\varphi}(Q_n, \cdot)}{p}^q
& \le  h^{kq}
\sum_{i=0}^{K}  \norm{\frac{1}{(2i)!} \wab  \varphi^{k+2i}  Q_n^{(k+2i)} }{p}^q  h^{2iq} |\xi|_{k+2i}^{2iq} \\
& \le h^{kq} \norm{\wab \varphi^{k}  Q_n^{(k)}}{p}^q
\left( 1+ \sum_{i=1}^{K} B^{2iq} \right) \\
& \le
3/2 \cdot h^{kq} \norm{\wab \varphi^{k}  Q_n^{(k)}}{p}^q .
\end{align*}
This immediately implies
\[
\Psi_{k,0}^\varphi (Q_n, t)_{\a,\b,p}  \le (3/2)^{1/q}  t^{k} \norm{\wab \varphi^{k} Q_n^{(k)}}{p} ,
\]
and so \ineq{1} is proved.

Recall now the following Remez-type inequality (see \eg \cite{hl}*{(2.22)}):
\begin{quote}
If  $0<p\le \infty$, $\a, \b\in J_p$, $a \ge 0$, $n\in\N$ is such that $n> \sqrt{a}$, and $P_n\in\Pn$, then
\be \label{remez}
\norm{\wab P_n}{p} \leq c \norm{\wab P_n}{\Lp[-1+a n^{-2}, 1-an^{-2}]} ,
\ee
where $c$ depends only on $\a$, $\b$, $a$ and $q$.
\end{quote}
Note that
\begin{align*}
\Omega_{k,0}^\varphi (Q_n, A, t)_{\a, \b, p}  & =
\sup_{0\leq h\leq t}\norm{ \wab(\cdot)
\Delta_{h\varphi(\cdot)}^k (Q_n,\cdot; \I_{A,h})}{\Lp(\I_{A,h})}  \\
& =
\sup_{0\leq h\leq t}\norm{ \wab(\cdot)
\Delta_{h\varphi(\cdot)}^k (Q_n,\cdot)}{\Lp(\calS_{k,A,h})} ,
\end{align*}
 where the set $\calS_{k,A,h}$ is an interval containing all $x$ so that $x \pm kh\varphi(x)/2 \in \I_{A,h}$. Observe that
 \[
 \calS_{k,A,h} \supset \I_{A', h} ,
 \]
where $A' := 2 \max\{A, k^2\}$, and so
\be \label{tmp1}
\Omega_{k,0}^\varphi (Q_n, A, t)_{\a, \b, p} \ge \sup_{0\leq h\leq t}\norm{ \wab(\cdot)
\Delta_{h\varphi(\cdot)}^k (Q_n,\cdot)}{\Lp(\I_{A', h})} .
\ee
Now it follows from \ineq{ident} that $\Delta^k_{h\varphi(x)}(Q_n, x)$ is a polynomial from $\Pn$ if $k$ is even, and it is a polynomial from $\Poly_{n-1}$ multiplied by $\varphi$ if $k$ is odd.

Hence, \ineq{remez} implies that, for $h \le 1/(\sqrt{2A'} n)$,
\begin{align} \label{tmp2}
\norm{ \wab \Delta^k_{h\varphi }(Q_n, \cdot )}{\Lp(\I_{A', h})} & \ge
\norm{ \wab \Delta^k_{h\varphi }(Q_n, \cdot )}{\Lp[-1+n^{-2}/2,1-n^{-2}/2] } \\ \nonumber
& \ge c \norm{ \wab \Delta^k_{h\varphi }(Q_n, \cdot )}{p}.
\end{align}
It now follows from \ineq{ident} that
\[
\Delta^k_{h\varphi(x)}(Q_n, x) - \varphi^{k}(x) Q_n^{(k)}(x) h^{k} =
 \sum_{i=1}^{K} \frac{1}{(2i)!}    \varphi^{k+2i}(x) Q_n^{(k+2i)}(x) h^{k+2i} \xi_{k+2i}^{2i} ,
\]
and so, as above,
\[
\norm{\wab \left( \Delta^k_{h\varphi }(Q_n, \cdot ) - \varphi^{k}  Q_n^{(k)}  h^{k}\right)}{p}^q \le 1/2 \cdot h^{kq} \norm{\wab \varphi^{k}  Q_n^{(k)}}{p}^q .
\]
Therefore,
\[
\norm{\wab  \Delta^k_{h\varphi }(Q_n, \cdot )}{p}^q \ge 1/2 \cdot h^{kq} \norm{\wab \varphi^{k}  Q_n^{(k)}}{p}^q ,
\]
which combined with \ineq{tmp1} and \ineq{tmp2} implies \ineq{2}.
\end{proof}

\sect{The polynomials of best approximation}

For   $f\in L^{\a,\b}_p$,   let $P^*_n = P^*_n(f)  \in\Pn$ and $E_n(f)_{\wab,p}$ be a polynomial and the degree of its best weighted approximation, respectively, \ie
\[
E_n(f)_{\wab,p}:= \inf_{p_n\in\Pn} \|\wab(f-p_n)\|_{p}   =   \|\wab(f-P^*_n)\|_{p} .
\]

Recall (see \cite{sam}*{Lemma 2.4} and \cite{whit}*{Theorem 1.4}) that, if $\a\ge 0$ and $\b\ge 0$, then, for any $k\in\N$, $0<p\le \infty$ and $f\in \L_p^{\a,\b}$,
\be \label{normest}
\w_{k,0}^\varphi(f, t)_{\a,\b, p} \le c  \norm{\wab f}{p}, \quad t>0 ,
\ee
with $c$ depending only on $k$, $\a$, $\b$ and $q$.
Also, for any $0<\vartheta \le 1$,
\be \label{jackest}
E_n(f)_{\wab, p} \le c  \w_{k,0}^\varphi(f, \vartheta n^{-1})_{\a,\b, p}, \quad n\geq k ,
\ee
 where $c$ depends on $\vartheta$ as well as $k$, $\a$, $\b$ and $q$.  

\begin{theorem} \label{bestapthm}
Let $k\in\N$, $\a,\b \ge 0$,   $0<p\le\infty$ and $f\in L^{\a,\b}_p$. Then, for any $n\in\N$,
\be\label{dir}
n^{-k}\|\wab  \varphi^k P_n^{*(k)}\|_{p}\le c
\w^\varphi_{k,0}(P^*_n,t)_{\a,\b,p}\le c\w^\varphi_{k,0}(f,t)_{\a,\b,p}, \quad t \ge \ccc n^{-1},
\ee
where constants  $c$ depend only on $k$, $\a$, $\b$ and  $q$.

Conversely, for $0<t \le\ccc/k$ and $n:=\lfloor\ccc/t\rfloor$,
\begin{align}\label{inv}
\w^\varphi_{k,0}(f,t)_{\a,\b,p} &\le c\left(\sum_{j=0}^\infty \w^\varphi_{k,0}(P^*_{2^jn},\ccc2^{-j}n^{-1})^q_{\a,\b,p}\right)^{1/q}\\
&\le c\left(\sum_{j=0}^\infty 2^{-jkq}n^{-kq}\|\wab\varphi^k P^{*(k)}_{2^jn}\|^q_p\right)^{1/q},\nonumber
\end{align}
where   $c$ depends  only on $k$, $\a$, $\b$  and $q$.
\end{theorem}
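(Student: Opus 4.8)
\textbf{Proof proposal for Theorem \ref{bestapthm}.}

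The plan is to combine the equivalence from \thm{mainthm} (with $r=0$) with the standard Jackson estimate \ineq{jackest}, the boundedness \ineq{normest}, and the Bernstein-type consequence \ineq{corbern}. For the direct estimate \ineq{dir}: the right-hand inequality is immediate from monotonicity of $\w_{k,0}^\varphi$ in $f$ together with the fact that $\w_{k,0}^\varphi(P_n^*-f,t)_{\a,\b,p}\le c\norm{\wab(f-P_n^*)}p=cE_n(f)_{\wab,p}\le c\w_{k,0}^\varphi(f,t)_{\a,\b,p}$ by \ineq{normest} and \ineq{jackest} (using $t\ge\ccc n^{-1}$, so $\vartheta=\ccc$ works), and then the $q$-triangle inequality $\w_{k,0}^\varphi(P_n^*,t)^q\le\w_{k,0}^\varphi(f,t)^q+\w_{k,0}^\varphi(f-P_n^*,t)^q$. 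The left-hand inequality of \ineq{dir} is precisely the lower bound in \thm{mainthm} applied to $P_n^*\in\Pn$, since for $t\asymp n^{-1}$ we have $t^k\norm{\wab\varphi^k P_n^{*(k)}}p\sim\w_{k,0}^\varphi(P_n^*,t)_{\a,\b,p}$; one must check that the hypothesis $t\le\ccc n^{-1}$ can be replaced by $t\ge\ccc n^{-1}$ here by using that $\w_{k,0}^\varphi(P_n^*,\cdot)$ is nondecreasing, so $\w_{k,0}^\varphi(P_n^*,t)\ge\w_{k,0}^\varphi(P_n^*,\ccc n^{-1})\sim (\ccc n^{-1})^k\norm{\wab\varphi^k P_n^{*(k)}}p\sim n^{-k}\norm{\wab\varphi^k P_n^{*(k)}}p$.

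For the converse \ineq{inv}, I would use the classical Dini–Lipschitz / telescoping device. Fix $0<t\le\ccc/k$ and set $n=\lfloor\ccc/t\rfloor$, so that $t\sim\ccc n^{-1}$. Write $f=P_n^*+\sum_{j=0}^\infty(P_{2^{j+1}n}^*-P_{2^j n}^*)$ in $L_p^{\a,\b}$ (this converges since $E_{2^j n}(f)_{\wab,p}\to0$). Then by the $q$-subadditivity of $\w_{k,0}^\varphi(\cdot,t)_{\a,\b,p}^q$,
\[
\w_{k,0}^\varphi(f,t)_{\a,\b,p}^q\le\w_{k,0}^\varphi(P_n^*,t)_{\a,\b,p}^q+\sum_{j=0}^\infty\w_{k,0}^\varphi(P_{2^{j+1}n}^*-P_{2^j n}^*,t)_{\a,\b,p}^q.
\]
For each difference $D_j:=P_{2^{j+1}n}^*-P_{2^j n}^*\in\Poly_{2^{j+1}n}$, since $t\le\ccc n^{-1}\le\ccc(2^{j+1}n)^{-1}\cdot 2^{j+1}$ is too big for the direct application of \thm{mainthm}, I instead use the fact (from \cite{sam}*{Corollary 1.9}, or directly) that moduli do not increase too fast, or more simply estimate $\w_{k,0}^\varphi(D_j,t)_{\a,\b,p}\le c\norm{\wab D_j}p\le c\big(E_{2^j n}(f)_{\wab,p}+E_{2^{j+1}n}(f)_{\wab,p}\big)$ when $2^j$ is large, and for small $j$ bound $\w_{k,0}^\varphi(D_j,t)\le c\,\w_{k,0}^\varphi(P_{2^{j+1}n}^*,t)+c\,\w_{k,0}^\varphi(P_{2^j n}^*,t)$. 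Using \ineq{jackest} to dominate $E_{2^j n}(f)_{\wab,p}$ by $\w_{k,0}^\varphi(f,\ccc(2^jn)^{-1})_{\a,\b,p}$... but this reintroduces $f$, so instead I convert back: $E_{2^j n}(f)_{\wab,p}\le c\,\w_{k,0}^\varphi(P_{2^jn}^*,\ccc(2^jn)^{-1})_{\a,\b,p}+c\,E_{2^jn}(f)_{\wab,p}\cdot(\text{small})$ — more cleanly, use $\norm{\wab(P_{2^{j+1}n}^*-f)}p=E_{2^{j+1}n}(f)\le E_{2^jn}(f)=\norm{\wab(P_{2^jn}^*-f)}p$ to get $\norm{\wab D_j}p\le 2E_{2^jn}(f)_{\wab,p}$, then \ineq{jackest} gives $E_{2^jn}(f)\le c\,\w_{k,0}^\varphi(f,\ccc(2^jn)^{-1})$; but to close the loop without $f$ on the right, apply \ineq{jackest} in the form $E_{2^jn}(f)\le c\,\w_{k,0}^\varphi(P^*_{2^{j-1}n},\ccc(2^jn)^{-1})+\cdots$ — the cleanest route is to note $\w_{k,0}^\varphi(P^*_{2^jn},\ccc2^{-j}n^{-1})\ge c E_{2^jn}(f)_{\wab,p}$ is false in general, so one really keeps $\w_{k,0}^\varphi(P^*_{2^jn},\ccc2^{-j}n^{-1})$ as the natural summand: by \thm{mainthm} with $\tilde t=\ccc(2^jn)^{-1}$, $\w_{k,0}^\varphi(P^*_{2^jn},\ccc(2^jn)^{-1})\sim(2^jn)^{-k}\norm{\wab\varphi^k P^{*(k)}_{2^jn}}p$, which gives the second inequality in \ineq{inv} from the first. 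So the real work is proving the first inequality of \ineq{inv}: bounding $\w_{k,0}^\varphi(D_j,t)_{\a,\b,p}$ by $c\,\w_{k,0}^\varphi(P^*_{2^{j+1}n},\ccc2^{-j-1}n^{-1})_{\a,\b,p}$ up to the right power-of-2 factors, which follows by writing $D_j^{(k)}$ in terms of $P^{*(k)}_{2^{j+1}n}$ and $P^{*(k)}_{2^jn}$, invoking \ineq{corbern} to pass between scales $2^jn$ and $2^{j+1}n$, and re-summing (the extra $2^{jk}$ factors are absorbed because $t^k=(\ccc/n)^k$ and $\w_{k,0}^\varphi(P^*_{2^jn},\ccc2^{-j}n^{-1})\sim 2^{-jk}n^{-k}\norm{\cdots}p$).

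The main obstacle is the bookkeeping in the converse: one must control $\w_{k,0}^\varphi(P^*_{2^{j+1}n}-P^*_{2^jn},t)_{\a,\b,p}$ at the \emph{fixed} small scale $t\sim\ccc n^{-1}$ in terms of quantities naturally attached to scale $2^{-j}n^{-1}$, and the factor $t^k$ versus $(2^jn)^{-k}$ produces a geometric weight $2^{jk}$ that must be killed by the decay of $\norm{\wab\varphi^kP^{*(k)}_{2^jn}}p$ coming through \ineq{corbern} and the Jackson/realization machinery; getting the constants uniform in $j$ (so that $\ccc n^{-1}\le\ccc(2^jn)^{-1}$ is never needed — only $t\le\ccc n^{-1}$ is available, and $2^jn\ge n$ makes the hypothesis $t\le\ccc(2^jn)^{-1}$ \emph{fail} for $j\ge1$) forces one to use the Bernstein inequality \ineq{corbern} to move $P^{*(k)}_{2^jn}$ up to a polynomial of degree comparable to $1/t$ before applying \thm{mainthm}, exactly mirroring \cite{hl} and \cite{sam}*{Lemma 2.4}. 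Once that estimate is in place, the two displayed inequalities in \ineq{inv} follow by \thm{mainthm} applied term-by-term and a $q$-norm re-summation, and the first inequality of \ineq{dir} is a direct citation of the lower bound in \thm{mainthm} together with monotonicity of the modulus.
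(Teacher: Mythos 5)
Your treatment of the direct estimate \ineq{dir} is correct and coincides with the paper's: the left inequality is \thm{mainthm} at $t=\ccc n^{-1}$ plus monotonicity of the modulus in $t$, and the right inequality is the $q$-triangle inequality together with \ineq{normest} and \ineq{jackest}. The problem is the converse \ineq{inv}, where your argument has a genuine gap. You decompose $f=P_n^*+\sum_{j}D_j$ with $D_j=P^*_{2^{j+1}n}-P^*_{2^jn}$ and try to control $\w_{k,0}^\varphi(D_j,t)_{\a,\b,p}$ at the \emph{fixed} scale $t\sim\ccc n^{-1}$. As you yourself observe, \thm{mainthm} is not applicable to $D_j\in\Poly_{2^{j+1}n}$ at that scale for $j\ge1$, and your proposed fixes do not close the hole: (i) the crude bound $\w_{k,0}^\varphi(D_j,t)^q\le c\norm{\wab D_j}{p}^q\le cE_{2^jn}(f)_{\wab,p}^q$ leads to $\sum_j E_{2^jn}(f)^q_{\wab,p}$, which after telescoping each term becomes a double sum $\sum_i(i+1)I^q_{2^in}$ — a logarithmic-type loss that does not give \ineq{inv}; (ii) rescaling via the doubling property $\w(g,\lambda t)\le c\lambda^k\w(g,t)$ produces exactly the factor $2^{jk}$ that cancels the $2^{-jk}$ decay of the summands (you end up with $n^{-k}\norm{\wab\varphi^kP^{*(k)}_{2^jn}}{p}$, not $2^{-jk}n^{-k}\norm{\wab\varphi^kP^{*(k)}_{2^jn}}{p}$), and it is anyway only available for $1\le p\le\infty$; (iii) \ineq{corbern} relates derivatives of different \emph{orders} of one polynomial, not polynomials of different degrees, so it cannot "pass between scales $2^jn$ and $2^{j+1}n$". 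Your claim that "the extra $2^{jk}$ factors are absorbed" is precisely what fails.

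The paper's route avoids the scale mismatch entirely and is the idea you are missing. One does not put the series $\sum_j D_j$ inside the modulus. Instead, write $\w^\varphi_{k,0}(f,t)^q_{\a,\b,p}\le c\,\w^\varphi_{k,0}(f-P_n^*,t)^q_{\a,\b,p}+c\,\w^\varphi_{k,0}(P_n^*,\ccc n^{-1})^q_{\a,\b,p}\le cE_n(f)^q_{\wab,p}+c\,\w^\varphi_{k,0}(P_n^*,\ccc n^{-1})^q_{\a,\b,p}$, and then telescope the \emph{errors}: $E_n(f)^q_{\wab,p}=\sum_{j\ge0}\bigl(E_{2^jn}(f)^q_{\wab,p}-E_{2^{j+1}n}(f)^q_{\wab,p}\bigr)$. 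Each difference is at most $I^q_{2^jn}$ where $I_{2^jn}=E_{2^jn}(P^*_{2^{j+1}n})_{\wab,p}$ is the error of best approximation \emph{of the polynomial} $P^*_{2^{j+1}n}$, and the Jackson inequality \ineq{jackest} (with $\vartheta=\ccc/2$) applied to that polynomial gives $I_{2^jn}\le c\,\w^\varphi_{k,0}(P^*_{2^{j+1}n},\ccc(2^{j+1}n)^{-1})_{\a,\b,p}$ — a modulus evaluated at the natural scale of its own degree, so \thm{mainthm} applies term by term and yields the second line of \ineq{inv}. The telescoping collapses to a single $E_n(f)^q_{\wab,p}$, so no extra factor of $j$ and no factor $2^{jk}$ ever appears. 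You should replace your decomposition step by this argument; the rest of your outline (the direct part and the final term-by-term application of \thm{mainthm}) can stand.
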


\begin{corollary}
Let $k\in\N$, $\a,\b \ge 0$,   $0<p\le\infty$, $f\in L^{\a,\b}_p$ and $\gamma >0$. Then,
\be\label{gamma}
\|\wab  \varphi^k P_n^{*(k)}\|_{p} = O(n^{k-\gamma}) \quad \text{iff} \quad \w^\varphi_{k,0}(f,t)_{\a,\b,p} = O(t^\gamma).
\ee
\end{corollary}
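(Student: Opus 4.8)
The plan is to derive the corollary directly from Theorem~\ref{bestapthm} by combining the direct estimate \ineq{dir} with the inverse estimate \ineq{inv}, using a standard discretization of the "$O$"-conditions. Throughout, write $\Phi_n := \|\wab\varphi^k P_n^{*(k)}\|_p$ and recall that $P^*_n$ is a polynomial of degree $<n$, so $\w^\varphi_{k,0}(P^*_n,t)_{\a,\b,p}$ and $t^k\|\wab\varphi^k P^{*(k)}_n\|_p$ are comparable for $t\le\ccc n^{-1}$ by \cor{maincor} (case $r=0$, $m=k$); this is the quantitative backbone behind both displays of Theorem~\ref{bestapthm}.

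\textbf{Necessity ($\Phi_n=O(n^{k-\gamma})$ $\Rightarrow$ $\w^\varphi_{k,0}(f,t)_{\a,\b,p}=O(t^\gamma)$).} Fix small $t\le\ccc/k$ and set $n:=\lfloor\ccc/t\rfloor$, so $n\sim t^{-1}$. Apply the second inequality in \ineq{inv}:
\[
\w^\varphi_{k,0}(f,t)_{\a,\b,p}^q \le c\sum_{j=0}^\infty 2^{-jkq}n^{-kq}\Phi_{2^jn}^q .
\]
By hypothesis $\Phi_{2^jn}\le c(2^jn)^{k-\gamma}$, hence the $j$th term is bounded by $c\, 2^{-jkq}n^{-kq}(2^jn)^{(k-\gamma)q} = c\, n^{-\gamma q}2^{-j\gamma q}$, and since $\gamma>0$ the geometric series $\sum_j 2^{-j\gamma q}$ converges. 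Therefore $\w^\varphi_{k,0}(f,t)_{\a,\b,p}^q\le c\, n^{-\gamma q}\le c\, t^{\gamma q}$, which is the desired bound. (Strictly, one should also handle $t$ that are not small, but for $t\ge\ccc/k$ the modulus is bounded by $c\|\wab f\|_p$ via \ineq{normest}, hence trivially $O(t^\gamma)$ after adjusting the constant, since $t$ is then bounded below.)

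\textbf{Sufficiency ($\w^\varphi_{k,0}(f,t)_{\a,\b,p}=O(t^\gamma)$ $\Rightarrow$ $\Phi_n=O(n^{k-\gamma})$).} Fix $n\in\N$ and choose $t=\ccc n^{-1}$ (for $n$ small the claim is vacuous after adjusting constants, so assume $n\ge\ccc$). The first chain \ineq{dir} gives $n^{-k}\Phi_n\le c\,\w^\varphi_{k,0}(f,t)_{\a,\b,p}$ with $t=\ccc n^{-1}\ge\ccc n^{-1}$, and the hypothesis yields $\w^\varphi_{k,0}(f,\ccc n^{-1})_{\a,\b,p}\le c(\ccc n^{-1})^\gamma\le c\, n^{-\gamma}$. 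Combining, $\Phi_n\le c\, n^{k}n^{-\gamma}=c\, n^{k-\gamma}$, as claimed.

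\textbf{Main obstacle.} Neither direction is deep once Theorem~\ref{bestapthm} is in hand; the only genuine point requiring care is the bookkeeping at the non-asymptotic end (small $n$, or $t$ not small), where the estimates \ineq{dir}, \ineq{inv} are stated only for restricted ranges of $t$, and one must absorb finitely many exceptional cases into the $O$-constant using the crude bound \ineq{normest}. A secondary technical check is that the summability argument in the necessity part uses $q=\min\{1,p\}$ correctly: the series $\sum_j 2^{-j\gamma q}$ is finite for every $\gamma>0$ and every $q\in(0,1]$, so no restriction on $p$ is needed. I expect the write-up to be short: one paragraph per implication plus a sentence on the exceptional ranges.
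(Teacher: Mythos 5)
Your derivation is correct and is exactly the intended (implicit) argument: the paper states this as an immediate consequence of Theorem~\ref{bestapthm}, with the sufficiency direction read off from \ineq{dir} at $t=\ccc n^{-1}$ and the necessity direction from summing the geometric series in the second line of \ineq{inv}, just as you do. Your handling of the exceptional ranges via \ineq{normest} (which is where the hypothesis $\a,\b\ge 0$ enters) is also the right bookkeeping.
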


\begin{proof}[Proof of $\thm{bestapthm}$] In order to prove \ineq{dir},  one may assume that $n\ge k$.
By \thm{mainthm} we have
\begin{align*}
n^{-k}\|\wab \varphi^k  P_n^{*(k)}\|_{p}   
&\le c\ccc^{-k}\omega^\varphi_{k,0}(P^*_n,\ccc n^{-1})_{\a,\b,p}\le c\omega^\varphi_{k,0}(P^*_n,t)_{\a,\b,p}.
\end{align*}
At the same time, by \ineq{normest} and \ineq{jackest} with $\vartheta = \ccc$,
\begin{align*}
\omega^\varphi_{k,0}(P^*_n,t)^q_{\a,\b,p} & \le\omega^\varphi_{k,0}(f-P^*_n,t)^q_{\a,\b,p}+\w^\varphi_{k,0}(f,t)^q_{\a,\b,p} \\
& \le c \|\wab(f-P_n^*)\|_{p}^q + \w^\varphi_{k,0}(f,t)^q_{\a,\b,p} \\
& \le c \w^\varphi_{k,0}(f, \ccc n^{-1})^q_{\a,\b,p} +   \w^\varphi_{k,0}(f,t)^q_{\a,\b,p} \\
& \le c \w^\varphi_{k,0}(f,t)^q_{\a,\b,p} ,
\end{align*}
and  \ineq{dir} follows.

In order to prove \ineq{inv} we follow \cite{hl}. Assume that $0<t\le\ccc/k$ and note that  $n = \lfloor\ccc/t \rfloor \ge k$.
Let $\hat P_n \in \Pn$ be a polynomial of best weighted approximation of $P^*_{2n}$, \ie
\[
I_n:=\norm{\wab (P^*_{2n}-\hat P_n)}{p} = E_n(P^*_{2n})_{\wab,p} .
\]
Then,
 \ineq{jackest} with $\vartheta=\ccc/2$  implies that
\[
I_n \le c\omega^\varphi_{k,0}(P^*_{2n},\ccc (2n)^{-1})_{\a,\b,p} ,
\]
while
\[
I^q_n  \ge \|\wab(f-\hat P_n)\|_{p}^q -\norm{\wab(f-P^*_{2n})}{p}^q  \ge E_n(f)_{\wab,p}^q -E_{2n}(f)_{\wab,p}^q .
\]
Combining the above inequalities   we obtain
\begin{align*}
E_n(f)_{\wab,p}^q & =\sum_{j=0}^\infty\bigl(E_{2^jn}(f)_{\wab,p}^q -E_{2^{j+1}n}(f)_{\wab,p}^q \bigr)\le\sum_{j=0}^\infty I^q_{2^jn} \\
& \le c\sum_{j=1}^\infty\omega^\varphi_{k,0}(P^*_{2^jn},\ccc  2^{-j}n^{-1})^q_{\a,\b,p} .
\end{align*}
Hence,
\begin{align*}
\omega^\varphi_{k,0}(f,t)^q_{\a,\b,p}
&\le    c \omega^\varphi_{k,0}(f-P^*_n,t)^q_{\a,\b,p}+ c \omega^\varphi_{k,0}(P^*_n,t)^q_{\a,\b,p} \\
&\le c E_n(f)_{\wab,p}^q + c \w^\varphi_{k,0}(P^*_n,\ccc n^{-1})^q_{\a,\b,p} \\
&\le c\sum_{j=0}^\infty \w^\varphi_{k,0}(P^*_{2^{j} n},\ccc2^{-j}n^{-1})^q_{\a,\b,p}\\
&\le c\sum_{j=0}^\infty2^{-jkq}n^{-kq}\|\wab\varphi^k P^{*(k)}_{2^jn}\|^q_p,
\end{align*}
where, for the last inequality, we used \thm{mainthm}.
This completes the proof of \ineq{inv}.
\end{proof}

\sect{Further properties of the moduli}

Following \cite{sam}*{Definition 1.4}, for $k\in\N$, $r\in\N_0$ and $f\in\B^r_p(w_{\a,\b})$, $1\le  p\le\infty$, we define the weighted $K$-functional as follows
\begin{align*}
\lefteqn{ K^\varphi_{k,r}(f^{(r)},t^k)_{\a,\b,p} } \\
&\quad :=\inf_{g\in\B^{k+r}_p(\wab)}   \left\{ \norm{\wab \varphi^r (f^{(r)}-g^{(r)})}{p}
+t^k \norm{\wab\varphi^{k+r}g^{(k+r)}}{p} \right\} .
\end{align*}
We note that
\[
K_{k,\varphi}(f,t^k)_{\wab,p} = K^\varphi_{k,0}(f,t^k)_{\a,\b,p}  ,
\]
where $K_{k,\varphi}(f,t^k)_{w,p}$ is
the weighted $K$-functional that was defined in \cite{dt}*{p. 55 (6.1.1)} as
\[
K_{k,\varphi}(f,t^k)_{w,p}:=\inf_{g\in\B^k_p(w)}    \{\|w(f-g)\|_{p}+t^k\|w\varphi^k g^{(k)}\|_{p}\} .
\]

The following lemma immediately  follows from  \cite{sam}*{Corollary 1.7}.
\begin{lemma} \label{lemsam}
If $k\in\N$, $r\in\N_0$, $r/2+\a\geq 0$, $r/2+\b\geq 0$, $1\leq p\leq \infty$ and $f\in\B_p^r(\wab)$,  then,
 for all $0<t\leq 2/k$,
\[
  K^\varphi_{k,r}(f^{(r)},t^k)_{\a,\b,p}
  \leq c \wkr(f^{(r)},t)_{\a,\b,p}
    \leq c  K^\varphi_{k,r}(f^{(r)},t^k)_{\a,\b,p}   .
\]
\end{lemma}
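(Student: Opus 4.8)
The plan is to reduce Lemma~\ref{lemsam} to the known equivalence between the moduli $\wkr$ and the three-part Ditzian--Totik modulus, and then to the classical $K$-functional theory. First I would invoke \ineq{equival}, which states that for $1\le p\le\infty$ and $0<t\le t_0$ one has $\wkr(f^{(r)},t)_{\a,\b,p}\sim \omega_\varphi^k(f^{(r)},t)_{\wab\varphi^r,p}$, the right-hand side being the standard weighted Ditzian--Totik modulus with the Jacobi-type weight $w:=\wab\varphi^r$. Since the hypotheses $r/2+\a\ge 0$ and $r/2+\b\ge 0$ guarantee that $w=\wab\varphi^r=(1-x)^{\a+r/2}(1+x)^{\b+r/2}$ is an admissible weight for the Ditzian--Totik framework, the classical equivalence of $\omega_\varphi^k(g,t)_{w,p}$ with the weighted $K$-functional $K_{k,\varphi}(g,t^k)_{w,p}$ (see \cite{dt}, and \cite{sam}*{Corollary 1.7} which packages exactly this) applies with $g=f^{(r)}$.

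The second step is to identify $K_{k,\varphi}(f^{(r)},t^k)_{w,p}$ with $K^\varphi_{k,r}(f^{(r)},t^k)_{\a,\b,p}$. Unwinding the definitions, $K_{k,\varphi}(f^{(r)},t^k)_{w,p}=\inf_{G\in\B^k_p(w)}\{\|w(f^{(r)}-G)\|_p+t^k\|w\varphi^kG^{(k)}\|_p\}$, and one checks that the substitution $G=g^{(r)}$ sets up a bijection between admissible $G\in\B^k_p(w)$ and admissible $g\in\B^{k+r}_p(\wab)$: indeed $w\varphi^kG^{(k)}=\wab\varphi^{k+r}g^{(k+r)}$ and $w(f^{(r)}-G)=\wab\varphi^r(f^{(r)}-g^{(r)})$, so the two infima coincide term by term. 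This is precisely the remark made just before the lemma that $K_{k,\varphi}(f,t^k)_{\wab,p}=K^\varphi_{k,0}(f,t^k)_{\a,\b,p}$, applied with $f\rightsquigarrow f^{(r)}$, $\a\rightsquigarrow\a+r/2$, $\b\rightsquigarrow\b+r/2$.

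Putting the two steps together for $0<t\le\min\{t_0,2/k\}$ gives
\[
K^\varphi_{k,r}(f^{(r)},t^k)_{\a,\b,p}\sim\omega_\varphi^k(f^{(r)},t)_{\wab\varphi^r,p}\sim\wkr(f^{(r)},t)_{\a,\b,p},
\]
which is the claimed double inequality. To obtain the stated range $0<t\le 2/k$ rather than $0<t\le\min\{t_0,2/k\}$ one uses that both sides are, up to constants, monotone-type quantities in $t$: for $t$ in the bounded range $[t_0,2/k]$ (if $t_0<2/k$) both $K^\varphi_{k,r}$ and $\wkr$ are comparable to their values at a fixed point like $t_0$ via the elementary bounds $K^\varphi_{k,r}(f^{(r)},t^k)\le (t/s)^k K^\varphi_{k,r}(f^{(r)},s^k)$ for $t\ge s$ together with the boundedness results already quoted (\cite{sam}*{Lemma 2.4}), so extending the constant to cover $t\le 2/k$ costs only a factor depending on $k,\a,\b,q$. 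The main (and essentially only) obstacle is bookkeeping: making sure the admissible function classes $\B^k_p(\wab\varphi^r)$ and $\B^{k+r}_p(\wab)$ really do correspond under $G=g^{(r)}$ — i.e.\ that $G^{(k-1)}\in AC_{\loc}$ with $\wab\varphi^{k+r}g^{(k+r)}\in\Lp$ is equivalent to $f^{(r-1)}\in AC_{\loc}$ plus the same integrability — which is routine but must be checked so that no infimum is taken over a strictly smaller set. Everything else is a direct citation of \cite{sam}*{Corollary 1.7} and \cite{dt}.
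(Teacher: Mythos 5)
Your argument is correct in substance, but it takes a genuinely different (and longer) route than the paper, whose entire proof is the one-line citation preceding the lemma: \cite{sam}*{Corollary 1.7} already states the asserted two-sided equivalence between $\wkr(f^{(r)},t)_{\a,\b,p}$ and $K^\varphi_{k,r}(f^{(r)},t^k)_{\a,\b,p}$ verbatim, for the full range $0<t\le 2/k$, so nothing further is needed. You instead chain together \ineq{equival}, the classical Ditzian--Totik equivalence $\omega^k_\varphi(g,t)_{w,p}\sim K_{k,\varphi}(g,t^k)_{w,p}$ from \cite{dt} for the Jacobi weight $w=\wab\varphi^r=w_{\a+r/2,\b+r/2}$, and the identification of $K_{k,\varphi}(f^{(r)},t^k)_{w_{\a+r/2,\b+r/2},p}$ with $K^\varphi_{k,r}(f^{(r)},t^k)_{\a,\b,p}$ via the substitution $G=g^{(r)}$ (which you rightly flag as the point where the admissible classes $\B^k_p(\wab\varphi^r)$ and $\B^{k+r}_p(\wab)$ must be matched; this does work, since an $r$-fold antiderivative of $G\in\B^k_p(\wab\varphi^r)$ lands in $\B^{k+r}_p(\wab)$, and the paper itself records the $r=0$ case of this identity just before the lemma). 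What your route buys is a derivation that is transparent about where the hypotheses $r/2+\a\ge 0$, $r/2+\b\ge 0$ and $1\le p\le\infty$ enter; what it costs is the restriction $0<t\le t_0$ inherited from \ineq{equival}, which forces the patching argument on $[t_0,2/k]$. That patch is only sketched but is salvageable: the bound $\wkr(f^{(r)},t)_{\a,\b,p}\le cK^\varphi_{k,r}(f^{(r)},t^k)_{\a,\b,p}$ follows for all $t$ from subadditivity and the two estimates of \cite{sam}*{Lemma 2.4}, while the reverse bound for $t\in[t_0,2/k]$ follows from $K^\varphi_{k,r}(f^{(r)},t^k)\le (t/t_0)^kK^\varphi_{k,r}(f^{(r)},t_0^k)$ together with the monotonicity of $\wkr$ in $t$. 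So the proof is acceptable, just not the paper's; the paper outsources everything to \cite{sam}*{Corollary 1.7}.
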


Hence,
\be \label{dteq}
\wkr(f^{(r)},t)_{\a,\b,p} \sim K^\varphi_{k,r}(f^{(r)},t^k)_{\a,\b,p} =   K_{k,\varphi}(f^{(r)},t^k)_{w_{\a+r/2, \b+r/2},p} ,
\ee
provided that all conditions in \lem{lemsam} are satisfied.

The following sharp Marchaud inequality was proved in \cite{dd} for $f\in L^{\a,\b}_p$, $1<p<\infty$.

\begin{theorem}[\cite{dd}*{Theorem 7.5}]
For $m \in\N$, $1<p<\infty$ and $\a,\b\in J_p$,  we have
\[
K_{m,\varphi}(f,t^m)_{\wab,p} \leq C t^m \left( \int_t^1 \frac{K_{m+1,\varphi}(f,u^{m+1})_{\wab,p}^{s_*}}{u^{m{s_*}+1}}\, du + E_m(f)_{\wab,p}^{s_*} \right)^{1/{s_*}}
\]
and
\[
K_{m,\varphi}(f,t^m)_{\wab,p} \leq C t^m \left( \sum_{n <1/t} n^{{s_*}m-1} E_n(f)_{\wab,p}^{s_*} \right)^{1/{s_*}} ,
\]
where $s_* = \min\{2,p\}$.
\end{theorem}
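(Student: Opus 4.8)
The plan is to realize the $K$-functional through a de la Vallée Poussin--type projection, to split that projection into Fourier--Jacobi dyadic blocks, and to invoke the Littlewood--Paley theory available for $1<p<\infty$ so as to produce the exponent $s_*=\min\{2,p\}$ in place of the exponent $q=1$ that the plain triangle inequality would give. Throughout, $\lesssim$ hides constants depending only on $m,\a,\b,p$, and I abbreviate $E_\ell:=E_\ell(f)_{\wab,p}$. By \ineq{dteq} (equivalently \lem{lemsam} together with the $K$-functional identity recalled just before it) it suffices to prove both inequalities with each $K$-functional replaced by the corresponding weighted modulus --- $K_{m,\varphi}(f,t^m)_{\wab,p}$ by $\w_{m,0}^\varphi(f,t)_{\a,\b,p}$ and $K_{m+1,\varphi}(f,u^{m+1})_{\wab,p}$ by $\w_{m+1,0}^\varphi(f,u)_{\a,\b,p}$; I prove the series form and deduce the integral form at the end. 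We may assume $0<t\le t_0$ for a suitable small $t_0=t_0(m)$, the complementary range being handled by monotonicity of the moduli and the $E_m^{s_*}$ summand. Since $\w_{m,0}^\varphi$ annihilates $\Poly_m$, it suffices to treat $f-P^*_m(f)$ in place of $f$: this leaves $\w_{m,0}^\varphi(f,\cdot)$, $K_{m+1,\varphi}(f,\cdot)$ and $E_\ell$ for $\ell\ge m$ unchanged and can only decrease $E_\ell$ for $\ell<m$, so after relabelling we assume $\|\wab f\|_p=E_m$.

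\emph{Realization and dyadic decomposition.} Let $\eta_N$ be a smooth dyadic Fourier--Jacobi cut-off, $\eta_Nf=\sum_\ell\eta(\ell/N)\hat f_\ell$ with $\eta\in C^\infty$, $\eta\equiv1$ on $[0,1]$ and $\supp\eta\subset[0,2)$; then $\eta_Nf\in\Poly_{2N}$, $\eta_N$ reproduces $\Poly_N$, and $\|\wab\eta_Nf\|_p\lesssim\|\wab f\|_p$, $\|\wab(f-\eta_Nf)\|_p\lesssim E_N$ for $\a,\b\in J_p$, $1<p<\infty$. Let $2^J$ be the largest power of $2$ with $t\le\ccc(2^{J+1})^{-1}$, put $N:=2^J\sim 1/t$, and let $j_0$ be the least integer with $2^{j_0}\ge m$. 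By subadditivity of $\w_{m,0}^\varphi$, the bound $\w_{m,0}^\varphi(g,t)_{\a,\b,p}\lesssim\|\wab g\|_p$ (\cite{sam}*{Lemma 2.4}; this is \ineq{normest} for $\a,\b\ge0$), and \thm{mainthm} applied to $\eta_Nf\in\Poly_{2N}$ (legitimate since $t\le\ccc(2N)^{-1}$),
\[
\w_{m,0}^\varphi(f,t)_{\a,\b,p}\lesssim\|\wab(f-\eta_Nf)\|_p+\w_{m,0}^\varphi(\eta_Nf,t)_{\a,\b,p}\lesssim E_N+t^m\bigl\|\wab\varphi^m(\eta_Nf)^{(m)}\bigr\|_p .
\]
Write $\eta_Nf=\eta_{2^{j_0}}f+\sum_{j_0\le j<J}\delta_j$ with $\delta_j:=(\eta_{2^{j+1}}-\eta_{2^j})f\in\Poly_{2^{j+2}}$; each $\delta_j$ has Fourier--Jacobi spectrum in $(2^j,2^{j+2})$, and since $\eta_{2^{j+1}}-\eta_{2^j}$ annihilates $\Poly_{2^j}$ we get $\|\wab\delta_j\|_p\lesssim E_{2^j}$, whence $\|\wab\varphi^m\delta_j^{(m)}\|_p\lesssim_m 2^{jm}E_{2^j}$ by iterating the Bernstein--Dzyadyk inequality $\|\wab\varphi^sP_n'\|_p\le cns\|\wab\varphi^{s-1}P_n\|_p$ used to prove \ineq{corbern}. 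The lowest block has degree $<4m$, so a finite-dimensional Markov estimate together with $\|\wab\eta_{2^{j_0}}f\|_p\lesssim\|\wab f\|_p$ gives $\|\wab\varphi^m(\eta_{2^{j_0}}f)^{(m)}\|_p\lesssim_m\|\wab f\|_p=E_m$. Since $E_N\le E_\ell$ for $\ell\le N$ and $\sum_{\ell\le N}\ell^{ms_*-1}\sim N^{ms_*}$ (and likewise for the term $E_m$), both $E_N$ and $t^m E_m$ are $\lesssim t^m\bigl(\sum_{\ell<1/t}\ell^{ms_*-1}E_\ell^{s_*}\bigr)^{1/s_*}$, so everything reduces to bounding $t^m\bigl\|\wab\varphi^m\sum_{j_0\le j<J}\delta_j^{(m)}\bigr\|_p$.

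\emph{Littlewood--Paley and the sharp exponent.} This is the crux and the only place where $1<p<\infty$ (as opposed to $0<p\le\infty$) is essential, since the plain triangle inequality yields only the $\ell^1$-sum $\sum_j\|\wab\varphi^m\delta_j^{(m)}\|_p$ and hence the classical, non-sharp Marchaud inequality. Because the $\delta_j$ are spectrally localized at scale $2^j$, and because $\varphi^mD^m$ maps the scale-$2^j$ spectral block for the weight $\wab$ to the scale-$2^j$ spectral block for the shifted weight $w_{\a+m/2,\b+m/2}$ with operator norm $\lesssim_m 2^{jm}$, the functions $g_j:=\wab\varphi^m\delta_j^{(m)}$ behave as the blocks of a Littlewood--Paley decomposition. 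The square-function equivalence for Fourier--Jacobi expansions available in this setting then gives $\|\sum_jg_j\|_p\sim\|(\sum_j|g_j|^2)^{1/2}\|_p$, and Minkowski's inequality (the embedding $\ell^{s_*}\hookrightarrow\ell^2$ when $p\ge2$, the triangle inequality in $\L_{p/2}$ when $p\le2$) bounds the right-hand side by $\bigl(\sum_j\|g_j\|_p^{s_*}\bigr)^{1/s_*}$. Inserting $\|g_j\|_p\lesssim_m 2^{jm}E_{2^j}$ and using $\sum_{2^j\le\ell<2^{j+1}}\ell^{ms_*-1}\sim 2^{jms_*}$ together with $t^m\sim N^{-m}$ to pass from the dyadic to the full sum, we obtain the series form
\[
\w_{m,0}^\varphi(f,t)_{\a,\b,p}\lesssim t^m\Bigl(\sum_{\ell<1/t}\ell^{ms_*-1}E_\ell^{s_*}\Bigr)^{1/s_*}.
\]
The integral form follows from the sharp Jackson inequality $E_\ell\lesssim\w_{m+1,0}^\varphi(f,1/\ell)_{\a,\b,p}\sim K_{m+1,\varphi}(f,\ell^{-(m+1)})_{\wab,p}$ for $\ell\ge m+1$ (cf.\ \ineq{jackest} and \ineq{dteq}; for $\a,\b\in J_p$ one uses the general weighted Jackson inequality), the standard comparison of $\sum_{\ell<1/t}\ell^{ms_*-1}K_{m+1,\varphi}(f,\ell^{-(m+1)})^{s_*}$ with $\int_t^1u^{-ms_*-1}K_{m+1,\varphi}(f,u^{m+1})^{s_*}\,du$ (using that $u\mapsto K_{m+1,\varphi}(f,u^{m+1})$ is non-decreasing), and the fact that, after the normalization $\|\wab f\|_p=E_m$, the finitely many terms with $\ell\le m$ are dominated by $E_m^{s_*}$.

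The estimates of the first two steps rely only on tools already at hand --- \thm{mainthm}, the Bernstein--Dzyadyk inequality behind \ineq{corbern}, the boundedness \ineq{normest} of the modulus, and the Jackson inequality --- so the sole obstacle is the Littlewood--Paley step: the square-function equivalence for Fourier--Jacobi expansions in $\Lpab$ with $\a,\b\in J_p$, together with the mapping property of $\varphi^mD^m$ on dyadic spectral blocks. This is precisely the harmonic-analytic content supplied by \cite{dd}, and it is what replaces the classical summation exponent $q=1$ by $s_*=\min\{2,p\}$; when $p=2$ this step reduces to Parseval's identity, which is why the inequality is cleanest there.
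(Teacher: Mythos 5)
This theorem is not proved in the paper at all: it is imported verbatim from Dai--Ditzian (\cite{dd}*{Theorem 7.5}) and used as a black box, so there is no in-paper argument to compare your proposal against. Judged on its own terms, your outline does reproduce the general shape of the Dai--Ditzian strategy (realization of the $K$-functional by a de la Vall\'ee Poussin--type projection, dyadic Fourier--Jacobi blocks $\delta_j$ with $\|\wab\delta_j\|_p\lesssim E_{2^j}$, Bernstein to get $\|\wab\varphi^m\delta_j^{(m)}\|_p\lesssim 2^{jm}E_{2^j}$, and a square-function argument to upgrade the $\ell^1$ summation to $\ell^{s_*}$). But the decisive step --- the weighted Littlewood--Paley equivalence $\|\sum_j g_j\|_p\sim\|(\sum_j|g_j|^2)^{1/2}\|_p$ for Fourier--Jacobi blocks in $\Lpab$ with general $\a,\b\in J_p$, together with the claim that $\varphi^mD^m$ acts on dyadic spectral blocks with the stated weight shift and operator norm $\sim 2^{jm}$ --- is simply asserted, and you yourself identify it as ``precisely the harmonic-analytic content supplied by \cite{dd}''. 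That makes the proposal circular as a proof: it reduces the theorem to the main technical machinery of the very paper being cited, which is exactly the part that is hard (it occupies most of \cite{dd} and rests on delicate multiplier and transplantation theorems for Jacobi expansions). Everything else in your write-up is routine bookkeeping around that gap.

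Two secondary problems. First, your opening reduction via \ineq{dteq} is not available in the stated generality: \lem{lemsam} with $r=0$ requires $\a,\b\ge0$, whereas the theorem is asserted for all $\a,\b\in J_p=(-1/p,\infty)$; since the statement is already phrased in terms of $K_{m,\varphi}$, you should work with the $K$-functional directly rather than pass to $\w^\varphi_{m,0}$. Second, the boundedness $\|\wab\eta_Nf\|_p\lesssim\|\wab f\|_p$ of the smooth cut-off on $\Lpab$ for all $\a,\b\in J_p$ and all $1<p<\infty$ is itself a nontrivial weighted mean-convergence/multiplier fact (Muckenhoupt-type restrictions enter for Jacobi expansions) and cannot be taken for granted; it needs the same harmonic-analytic input as the square-function step.
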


In view of \ineq{dteq}, the following result holds.

\begin{corollary}
For   $1<p<\infty$, $r\in\N_0$, $m \in\N$, $r/2+\a\geq 0$, $r/2+\b\ge 0$ and $f\in\B^r_p(\wab)$, we have
\[
\omega^{\varphi}_{m,r} (f^{(r)},t)_{\a,\b,p} \leq C t^m \left( \int_t^1\frac{\omega^{\varphi}_{m+1,r}(f^{(r)},u)_{\a,\b, p}^{s_*}}{u^{m{s_*}+1}}\, du + E_m(f^{(r)})_{\wab\varphi^r,p}^{s_*} \right)^{1/{s_*}}
\]
and
\[
\omega^{\varphi}_{m,r}(f^{(r)},t)_{\a,\b, p} \leq C t^m \left( \sum_{n <1/t} n^{{s_*}m-1} E_n(f^{(r)})_{\wab \varphi^r,p}^{s_*} \right)^{1/{s_*}} ,
\]
where  $s_* = \min\{2,p\}$.
\end{corollary}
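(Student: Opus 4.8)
The plan is to deduce the Corollary from the sharp Marchaff inequality of \cite{dd}*{Theorem 7.5} by the same substitution trick used in the proof of \thm{mainthm}, namely passing from $f$ to $Q := f^{(r)}$ and from the weight $\wab$ to $w_{\a+r/2,\b+r/2}$. First I would record that, since $r/2+\a \ge 0$ and $r/2+\b\ge 0$, the hypotheses of \lem{lemsam} are satisfied for $Q = f^{(r)}$ with the shifted parameters, so that for $0<t\le 2/(m+1)$ (and a fortiori for the relevant range of $t$) we have, by \ineq{dteq},
\[
\omega^{\varphi}_{m,r}(f^{(r)},t)_{\a,\b,p} \sim K_{m,\varphi}(f^{(r)},t^m)_{w_{\a+r/2,\b+r/2},p}
\]
and likewise $\omega^{\varphi}_{m+1,r}(f^{(r)},u)_{\a,\b,p} \sim K_{m+1,\varphi}(f^{(r)},u^{m+1})_{w_{\a+r/2,\b+r/2},p}$, with equivalence constants depending only on $m$, $r$, $\a$, $\b$.

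Next I would apply \cite{dd}*{Theorem 7.5} with $f$ replaced by $Q = f^{(r)}$ and $\wab$ replaced by $w_{\a+r/2,\b+r/2}$. Note that the parameter condition in that theorem is $\a',\b'\in J_p$; since $\a+r/2$ and $\b+r/2$ are $\ge 0$ they certainly lie in $J_p$ for $1<p<\infty$, so the theorem applies verbatim. This yields
\[
K_{m,\varphi}(f^{(r)},t^m)_{w_{\a+r/2,\b+r/2},p} \le C t^m\left( \int_t^1 \frac{K_{m+1,\varphi}(f^{(r)},u^{m+1})_{w_{\a+r/2,\b+r/2},p}^{s_*}}{u^{ms_*+1}}\,du + E_m(f^{(r)})_{w_{\a+r/2,\b+r/2},p}^{s_*}\right)^{1/s_*}
\]
and the analogous discrete inequality, with $s_* = \min\{2,p\}$. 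Now I substitute the $K$-functional equivalences from the previous paragraph into both sides; the ratio $\omega/K$ is bounded above and below by constants depending only on $m,r,\a,\b$, so the integral and the sum are affected only by an overall constant factor, which can be absorbed into $C$. Finally I identify $E_m(f^{(r)})_{w_{\a+r/2,\b+r/2},p}$ with $E_m(f^{(r)})_{\wab\varphi^r,p}$: this is just the definition of the weighted best approximation error, since $w_{\a+r/2,\b+r/2}(x) = \wab(x)\varphi^r(x)$ pointwise on $[-1,1]$. Combining these substitutions produces exactly the two displayed inequalities of the Corollary.

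The only genuinely delicate point is bookkeeping on the range of $t$ and $u$: the equivalence $\omega_{m+1,r}\sim K_{m+1,\varphi}$ from \lem{lemsam} holds for $0<u\le 2/(m+1)$, whereas the Marchaff integral runs up to $u=1$. For $2/(m+1)\le u\le 1$ one must instead use that both $\omega_{m+1,r}(f^{(r)},u)_{\a,\b,p}$ and the corresponding $K$-functional are bounded above and below by a constant multiple of $\norm{\wab\varphi^r f^{(r)}}{p}$ (the upper bound for $\omega$ coming from a weighted analogue of \ineq{normest} applied to $f^{(r)}$ with the shifted weight, the lower bound being trivial), so that the ratio remains pinched between two constants on the full range $0<u\le 1$; alternatively one observes that $m+1\ge 2$ forces $2/(m+1)\le 1$ to be the only problematic subinterval, which contributes a bounded amount that is in any case dominated by the $E_m$ term. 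Once this uniform comparison is in hand the rest is mechanical, so I expect this range issue — rather than any analytic difficulty — to be the main obstacle, and it is handled purely by invoking the boundedness statements already available from \cite{sam} and the present excerpt.
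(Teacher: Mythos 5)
Your proposal is correct and follows essentially the same route as the paper, which offers no proof beyond the remark that the corollary holds ``in view of \ineq{dteq}'': apply \cite{dd}*{Theorem 7.5} to $f^{(r)}$ with the shifted weight $w_{\a+r/2,\b+r/2}=\wab\varphi^r$ and translate the $K$-functionals into moduli via \lem{lemsam}. The one blemish is your claim that $\omega^\varphi_{m+1,r}(f^{(r)},u)_{\a,\b,p}$ and $K_{m+1,\varphi}$ are bounded \emph{below} by a constant multiple of $\norm{\wab\varphi^r f^{(r)}}{p}$ on $2/(m+1)\le u\le 1$ --- this is false (both vanish when $f^{(r)}$ is a polynomial of degree $\le m$ while the norm does not) --- but your alternative justification is sound, since $K_{m+1,\varphi}(f^{(r)},u^{m+1})_{\wab\varphi^r,p}\le E_{m+1}(f^{(r)})_{\wab\varphi^r,p}\le E_m(f^{(r)})_{\wab\varphi^r,p}$ (take $g$ to be the best approximant of degree $\le m$, whose $(m+1)$st derivative vanishes), so the tail of the integral over $[2/(m+1),1]$ is indeed absorbed into the $E_m$ term.
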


The following sharp Jackson inequality was proved in \cite{ddt}.

\begin{theorem}[\mbox{\cite{ddt}*{Theorem 6.2}}]
For $1<p<\infty$, $\a, \b \in J_p$ and $m \in\N$, we have
\[
2^{-nm} \left( \sum_{j=j_0}^n 2^{mj{s^*}} E_{2^j}(f)^{s^*}_{\wab,p} \right)^{1/{s^*}} \leq C K_{m,\varphi}(f, 2^{-nm})_{\wab,p}
\]
and
\[
2^{-nm} \left( \sum_{j=j_0}^n 2^{mj{s^*}}  K_{m+1,\varphi}(f, 2^{-j(m+1)})_{\wab,p}^{s^*}    \right)^{1/{s^*}} \leq C K_{m,\varphi}(f, 2^{-nm})_{\wab,p} ,
\]
where $2^{j_0} \geq m$ and $s^* = \max\{p,2\}$.
\end{theorem}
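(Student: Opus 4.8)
The plan is to follow \cite{ddt}. The first step is to replace the two $K$-functionals in the statement by \emph{realization functionals} built from the polynomials of best approximation $P^*_{2^j}=P^*_{2^j}(f)$. In the Jacobi-weighted setting one has
\[
K_{m,\varphi}(f,2^{-nm})_{\wab,p}\sim \|\wab(f-P^*_{2^n})\|_p + 2^{-nm}\,\|\wab\varphi^{m}(P^*_{2^n})^{(m)}\|_p ,
\]
and likewise for $K_{m+1}$; when $\a,\b\ge0$ this is already a consequence of the direct estimate \ineq{dir}, the Jackson estimate \ineq{jackest} and \thm{mainthm}, and for the remaining $\a,\b\in J_p$ it is part of the apparatus of \cite{ddt}. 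Granting it, both inequalities of the theorem become assertions about the single polynomial $P:=P^*_{2^n}$ (of degree $<2^n$) and its approximation errors at the coarser scales $2^j$, $j\le n$.

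Next I would peel off the part of $E_{2^j}(f)$ coming from $f-P$: since $E_{2^j}(f)_{\wab,p}\le\|\wab(f-P)\|_p+E_{2^j}(P)_{\wab,p}$ and $\sum_{j_0\le j\le n}2^{mjs^*}\lesssim 2^{mns^*}$, the $f-P$ contribution is $\lesssim\|\wab(f-P)\|_p=E_{2^n}(f)$ after summation, which is controlled by the realization. It then remains to prove the polynomial ``sharp converse'' inequality
\[
\Big(\sum_{j=j_0}^{n}2^{mj s^*}E_{2^j}(P)_{\wab,p}^{s^*}\Big)^{1/s^*}\le C\,\|\wab\varphi^{m}P^{(m)}\|_p .
\]
For this I would split $P=\sum_{i\ge j_0}\eta_i$ into dyadic de la Vall\'ee Poussin blocks ($\eta_i\in\Poly_{2^{i+1}}$ reproducing $\Poly_{2^i}$, with $\eta_i=0$ once $2^i\ge 2^n$), so that $E_{2^j}(P)_{\wab,p}\lesssim\sum_{i\ge j}\|\wab\eta_i\|_p$; a discrete Hardy inequality (here $m>0$ is used) reduces the left-hand side to $\big(\sum_i(2^{mi}\|\wab\eta_i\|_p)^{s^*}\big)^{1/s^*}$; a reverse Bernstein inequality for the frequency-localized blocks upgrades this to $\big(\sum_i\|\wab\varphi^{m}\eta_i^{(m)}\|_p^{s^*}\big)^{1/s^*}$; and $\{\varphi^m\eta_i^{(m)}\}$ is, up to harmless modifications, a Littlewood--Paley decomposition of $\varphi^m P^{(m)}$.

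The last inequality is then the Littlewood--Paley step, which is where the exponent $s^*=\max\{p,2\}$ is forced. When $p\ge2$ ($s^*=p$) the pointwise embedding $\ell_2\hookrightarrow\ell_p$ gives $\big(\sum_i\|\wab\varphi^m\eta_i^{(m)}\|_p^p\big)^{1/p}\le\big\|\big(\sum_i|\wab\varphi^m\eta_i^{(m)}|^2\big)^{1/2}\big\|_p$, and the square function on the right is $\lesssim\|\wab\varphi^m P^{(m)}\|_p$ by the Littlewood--Paley / Marcinkiewicz multiplier inequality for Jacobi expansions in $\Lpab$. When $1<p<2$ ($s^*=2$) one passes to the conjugate exponent by duality, reducing the bound to the regime $p'>2$ just treated. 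Either way one obtains the polynomial inequality and hence the first displayed inequality of the theorem; the second, with $K_{m+1,\varphi}(f,2^{-j(m+1)})$ in place of $E_{2^j}(f)$, follows by the same scheme, now using the realization of $K_{m+1}$ and the weighted Bernstein inequality relating $\varphi^{m+1}(\cdot)^{(m+1)}$ and $2^{i}\varphi^{m}(\cdot)^{(m)}$ on each block.

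The main obstacle is entirely harmonic-analytic and lives in the Jacobi-weighted world: the uniform $\Lpab$-boundedness of the de la Vall\'ee Poussin blocks, the reverse Bernstein inequality on frequency annuli, and above all the Littlewood--Paley inequality for Jacobi expansions in $\Lpab$ together with the passage through the non-polynomial factor $\varphi^m$ (best handled by replacing ordinary differentiation by the ``$\varphi$-derivative'' $\varphi\,d/dx$ and the Jacobi differential operator). The hypothesis $1<p<\infty$ enters precisely through these multiplier theorems, and it is the $\ell_2\leftrightarrow\ell_{s^*}$ comparison above --- direct for $p\ge2$, by duality for $p<2$ --- that removes the spurious logarithmic factor a naive telescoping of the blocks would produce, which is exactly what makes the exponent $s^*=\max\{p,2\}$ sharp.
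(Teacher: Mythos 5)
This theorem is not proved in the paper at all: it is quoted verbatim from Dai--Ditzian--Tikhonov \cite{ddt}*{Theorem 6.2} and used as a black box (the paper's only contribution here is the subsequent corollary obtained by combining it with the equivalence \ineq{dteq}). So there is no internal proof to compare your argument against, and a reader of this paper would not expect one.

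Judged on its own terms, your sketch correctly reconstructs the architecture of the argument in \cite{ddt}: replace the $K$-functionals by realization functionals, reduce to a ``sharp converse'' inequality for a single polynomial, decompose into near-orthogonal de la Vall\'ee Poussin blocks, and obtain the exponent $s^*=\max\{p,2\}$ from an $\ell_2\leftrightarrow\ell_{s^*}$ comparison backed by Littlewood--Paley theory (for $1<p<2$ the step $\bigl(\sum_j\|g_j\|_p^2\bigr)^{1/2}\le\bigl\|\bigl(\sum_j|g_j|^2\bigr)^{1/2}\bigr\|_p$ also follows directly from the reverse Minkowski inequality in $L_{p/2}$, so duality is not essential there). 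But be aware that everything you flag as ``the main obstacle'' --- the Littlewood--Paley/multiplier theory for Jacobi expansions in $\Lpab$ for general $\a,\b\in J_p$, the reverse Bernstein inequality on frequency annuli, and the uniform boundedness of the de la Vall\'ee Poussin means in the weighted norm --- is precisely the substantive content of \cite{dd} and \cite{ddt}, not a routine appendix; naming these tools does not discharge them. In particular, your appeal to \ineq{dir}, \ineq{jackest} and \thm{mainthm} for the realization covers only $\a,\b\ge0$, whereas the theorem is stated for all $\a,\b\in J_p$. As a summary of how the cited proof goes, your proposal is accurate; as a self-contained proof it leaves the hardest steps as imports, which is exactly what the paper itself does by citing \cite{ddt}.
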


Again, by virtue of \ineq{dteq}, we have,
\begin{corollary}
For   $1<p<\infty$, $r\in\N_0$, $m \in\N$, $r/2+\a\ge 0$, $r/2+\b\ge 0$ and $f\in\B^r_p(\wab)$, we have
\[
2^{-nm} \left( \sum_{j=j_0}^n 2^{mj{s^*}} E_{2^j}(f^{(r)})^{s^*}_{\wab\varphi^r,p} \right)^{1/{s^*}} \leq C  \omega^{\varphi}_{m,r}(f^{(r)},2^{-n})_{\a,\b, p}
\]
and
\[
2^{-nm} \left( \sum_{j=j_0}^n 2^{mj{s^*}}  \omega^{\varphi}_{m+1,r}(f^{(r)},2^{-j})_{\a,\b, p}^{s^*}  \right)^{1/{s^*}} \leq C \omega^{\varphi}_{m,r}(f^{(r)},2^{-n})_{\a,\b, p},
\]
where $2^{j_0} \geq m$ and $s^* = \max\{p,2\}$.
\end{corollary}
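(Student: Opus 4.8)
The plan is to deduce this corollary directly from the two sharp Jackson-type estimates in Theorem~\cite{ddt}*{Theorem 6.2} together with the equivalence \ineq{dteq}. First I would observe that for a function $f\in\B^r_p(\wab)$ with $r/2+\a\ge 0$ and $r/2+\b\ge 0$ and $1<p<\infty$, setting $g:=f^{(r)}$ we have $g\in\B^0_p(w_{\a+r/2,\b+r/2})=L_p^{\a+r/2,\b+r/2}$, so that the cited theorem applies to $g$ with the Jacobi weight $w_{\a+r/2,\b+r/2}$ in place of $\wab$. This is legitimate since $\a+r/2,\b+r/2\in J_p$ whenever $r/2+\a\ge 0$ and $r/2+\b\ge 0$ and $1<p<\infty$ (indeed $0\in J_p$ for $p\ge 1$). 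Then the quantities appearing in the cited theorem are $K_{m,\varphi}(g,\cdot)_{w_{\a+r/2,\b+r/2},p}$, $K_{m+1,\varphi}(g,\cdot)_{w_{\a+r/2,\b+r/2},p}$ and $E_{2^j}(g)_{w_{\a+r/2,\b+r/2},p}$.

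Next I would translate each of these three quantities into the $\omega^\varphi_{k,r}$ and weighted-error notation of the corollary. For the $K$-functionals, \ineq{dteq} gives
\[
K_{m,\varphi}(f^{(r)},t^m)_{w_{\a+r/2,\b+r/2},p}\sim\omega^\varphi_{m,r}(f^{(r)},t)_{\a,\b,p},
\]
valid for $0<t\le 2/m$, and the analogous equivalence with $m$ replaced by $m+1$; both require exactly the hypotheses $r/2+\a\ge 0$, $r/2+\b\ge 0$, $1\le p\le\infty$, $f\in\B^r_p(\wab)$ that are in force. For the approximation errors, $E_n(f^{(r)})_{w_{\a+r/2,\b+r/2},p}=E_n(f^{(r)})_{\wab\varphi^r,p}$ by definition of the weighted degree of best approximation (the weight $\wab\varphi^r$ is precisely $w_{\a+r/2,\b+r/2}$). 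Substituting $t=2^{-n}$ (so $t^m=2^{-nm}$, which is $\le 2/m$ once $n$ is large enough; for the finitely many small $n$ the inequalities are trivial after adjusting $C$, or one notes the sums are empty when $n<j_0$) yields both displayed inequalities of the corollary, with the constant $C$ absorbing the equivalence constants from \ineq{dteq} — these depend only on $m$, $\a$, $\b$, $p$ and $q$, which is consistent with the statement.

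The only genuinely delicate point — and the step I would expect to require a word of care — is the range of validity of the equivalence \ineq{dteq}: it holds for $0<t\le 2/k$, so when we plug in $t=2^{-n}$ we need $2^{-n}\le 2/m$ and, for the second inequality which also involves $\omega^\varphi_{m+1,r}(f^{(r)},2^{-j})$, we need $2^{-j}\le 2/(m+1)$ for all $j$ in the summation range $j_0\le j\le n$. Since $2^{j_0}\ge m$ forces $j_0\ge\log_2 m$, one has $2^{-j}\le 2^{-j_0}\le 1/m<2/(m+1)$ for all such $j$, so the constraint is automatically met; similarly $2^{-n}\le 2^{-j_0}\le 1/m$. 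Thus no extra hypothesis beyond $2^{j_0}\ge m$ is needed. After this verification, the proof is a one-line citation of Theorem~\cite{ddt}*{Theorem 6.2} combined with \ineq{dteq}, exactly as the phrase ``by virtue of \ineq{dteq}'' in the text indicates; I would write it out in two or three lines rather than belabour the substitutions.
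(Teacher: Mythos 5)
Your proposal is correct and follows exactly the route the paper intends: apply \cite{ddt}*{Theorem 6.2} to $g=f^{(r)}$ with the weight $w_{\a+r/2,\b+r/2}=\wab\varphi^r$ and then translate the $K$-functionals into moduli via \ineq{dteq}, which is precisely what the paper's phrase ``by virtue of \ineq{dteq}'' summarizes. Your additional check that $2^{j_0}\ge m$ keeps all arguments within the range $0<t\le 2/k$ of \lem{lemsam} is a sensible verification of a detail the paper leaves implicit.
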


\begin{corollary}
For   $1<p<\infty$, $r\in\N_0$, $m \in\N$, $r/2+\a\ge 0$, $r/2+\b\ge 0$ and $f\in\B^r_p(\wab)$, we have
\[
t^m  \left(  \int_t^{1/m}        \frac{\omega^{\varphi}_{m+1,r}(f^{(r)},u)_{\a,\b,p}^{s^*}}{u^{m{s^*}+1}} \, du \right)^{1/{s^*}} \leq C \omega^{\varphi}_{m,r}(f^{(r)},t)_{\a,\b,p}, \quad 0<t\leq 1/m,
\]
where  $s^* = \max\{p,2\}$.
\end{corollary}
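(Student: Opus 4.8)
The plan is to \emph{deduce the inequality from the second inequality in the preceding corollary} --- the sharp dyadic Jackson inequality for the moduli $\omega^{\varphi}_{m,r}$ --- by passing from its dyadic sum to the integral. The only additional ingredients are the monotonicity of $u\mapsto\omega^{\varphi}_{m+1,r}(f^{(r)},u)_{\a,\b,p}$ in $u$ and the estimate $\omega^{\varphi}_{k,r}(f^{(r)},\lambda u)_{\a,\b,p}\le c\lambda^{k}\omega^{\varphi}_{k,r}(f^{(r)},u)_{\a,\b,p}$, $\lambda\ge1$, of \cite{sam}*{Corollary 1.9}, applied with $k=m$ and $k=m+1$ (both are available since $r/2+\a,r/2+\b\ge0$ and $1<p<\infty$). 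Write $\Phi(u):=\omega^{\varphi}_{m+1,r}(f^{(r)},u)_{\a,\b,p}$ and put $j_0:=\lceil\log_2 m\rceil$, so that $2^{j_0}\ge m$ (the hypothesis of the preceding corollary) while $2^{j_0}<2m$, hence $\tfrac1{2m}<2^{-j_0}\le\tfrac1m$. I would first record two elementary facts. First, by monotonicity of $\Phi$, a direct computation gives $\int_{2^{-j-1}}^{2^{-j}}u^{-ms^*-1}\Phi(u)^{s^*}\,du\le c_m\,2^{jms^*}\Phi(2^{-j})^{s^*}$ for every integer $j$. Second, keeping only the $j=n$ summand in the preceding corollary yields $\Phi(2^{-n})\le c\,\omega^{\varphi}_{m,r}(f^{(r)},2^{-n})_{\a,\b,p}$ for all $n\ge j_0$; combining this with the doubling of $\Phi$ (i.e. \cite{sam}*{Corollary 1.9} with $k=m+1$) and the monotonicity of $\omega^{\varphi}_{m,r}$ gives $\Phi(u)\le c_m\,\omega^{\varphi}_{m,r}(f^{(r)},1/m)_{\a,\b,p}$ for all $0<u\le1/m$.

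Now fix $0<t\le1/m$ and split $\int_t^{1/m}u^{-ms^*-1}\Phi(u)^{s^*}\,du$ into the part over $[t,2^{-j_0}]$ (present only when $t<2^{-j_0}$) and the part over $[\max\{t,2^{-j_0}\},1/m]$. For the second part the variable stays $\ge2^{-j_0}>\tfrac1{2m}$, so $\int_{\max\{t,2^{-j_0}\}}^{1/m}u^{-ms^*-1}\,du\le c_m$, and by the second recorded fact this part is at most $c_m\,\omega^{\varphi}_{m,r}(f^{(r)},1/m)_{\a,\b,p}^{s^*}$; multiplying by $t^m$ and using \cite{sam}*{Corollary 1.9} with $\lambda=1/(mt)\ge1$ gives $t^m\bigl(\int_{\max\{t,2^{-j_0}\}}^{1/m}u^{-ms^*-1}\Phi(u)^{s^*}\,du\bigr)^{1/s^*}\le c_m t^m\,\omega^{\varphi}_{m,r}(f^{(r)},1/m)_{\a,\b,p}\le c_m\,\omega^{\varphi}_{m,r}(f^{(r)},t)_{\a,\b,p}$. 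For the first part, assume $t<2^{-j_0}$ and choose $n\ge j_0$ with $2^{-n-1}<t\le2^{-n}$; summing the first recorded fact over the dyadic blocks filling $[2^{-n},2^{-j_0}]$ and adding $\int_t^{2^{-n}}u^{-ms^*-1}\Phi(u)^{s^*}\,du\le c_m2^{nms^*}\Phi(2^{-n})^{s^*}$ (valid since $[t,2^{-n}]\subset[2^{-n-1},2^{-n}]$) yields $\int_t^{2^{-j_0}}u^{-ms^*-1}\Phi(u)^{s^*}\,du\le c_m\sum_{j=j_0}^{n}2^{jms^*}\Phi(2^{-j})^{s^*}$. The preceding corollary bounds the last sum by $c\,2^{nms^*}\,\omega^{\varphi}_{m,r}(f^{(r)},2^{-n})_{\a,\b,p}^{s^*}$, so $t^m\bigl(\int_t^{2^{-j_0}}u^{-ms^*-1}\Phi(u)^{s^*}\,du\bigr)^{1/s^*}\le c_m(t2^n)^m\,\omega^{\varphi}_{m,r}(f^{(r)},2^{-n})_{\a,\b,p}\le c_m\,\omega^{\varphi}_{m,r}(f^{(r)},t)_{\a,\b,p}$, using $t2^n\le1$, $2^{-n}<2t$ and \cite{sam}*{Corollary 1.9} once more. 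Adding the two parts and using $(a+b)^{1/s^*}\le a^{1/s^*}+b^{1/s^*}$ (valid as $s^*\ge1$) finishes the proof.

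The step I expect to be the real obstacle is the control of the ``tail'' $\int_{2^{-j_0}}^{1/m}$, the portion of the integral near the fixed upper endpoint $1/m$, which is \emph{not} represented in the dyadic sum of the preceding corollary. A crude estimate there yields only a constant, a multiple of $\omega^{\varphi}_{m+1,r}(f^{(r)},1/m)_{\a,\b,p}$, and this is not dominated by $c\,\omega^{\varphi}_{m,r}(f^{(r)},t)_{\a,\b,p}$ for small $t$; the resolution is that $u\mapsto\omega^{\varphi}_{m,r}(f^{(r)},u)_{\a,\b,p}/u^{m}$ is quasi-monotone --- which is exactly \cite{sam}*{Corollary 1.9} --- and this is what converts the constant tail bound into the desired estimate. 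A secondary point to verify is that the block-by-block passage from the integral to the dyadic sum is lossless, with no extraneous factor $\log(1/t)$; this is guaranteed precisely because we invoke the \emph{sharp} ($s^*$-summed) form of the Jackson inequality rather than the plain bound $\omega^{\varphi}_{m+1,r}\le c\,\omega^{\varphi}_{m,r}$.
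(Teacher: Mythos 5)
Your proof is correct and follows exactly the route the paper intends: the corollary is stated as an immediate consequence of the preceding dyadic sharp Jackson inequality, obtained by converting the sum over dyadic blocks into the integral via monotonicity of $u\mapsto\omega^{\varphi}_{m+1,r}(f^{(r)},u)_{\a,\b,p}$ and the doubling property $\omega^{\varphi}_{k,r}(f^{(r)},\lambda t)_{\a,\b,p}\le c\lambda^{k}\omega^{\varphi}_{k,r}(f^{(r)},t)_{\a,\b,p}$ from \cite{sam}*{Corollary 1.9}. Your careful treatment of the tail near $1/m$ and of the block containing $t$ supplies the details the paper leaves implicit.
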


\begin{bibsection}
\begin{biblist}

%
%
\bib{dd}{article}{
author={Dai, F.},
author={Ditzian, Z.},
title={Littlewood-Paley theory and a sharp Marchaud inequality},
journal={Acta Sci. Math. (Szeged)},
volume={71},
date={2005},
number={1-2},
pages={65--90},
}
\bib{ddt}{article}{
author={Dai, F.},
author={Ditzian, Z.},
author={Tikhonov, S.},
title={Sharp Jackson inequalities},
journal={J. Approx. Theory},
volume={151},
date={2008},
number={1},
pages={86--112},
}
%
%

\bib{dt}{book}{
author={Ditzian, Z.},
author={Totik, V.},
title={Moduli of smoothness},
series={Springer Series in Computational Mathematics},
volume={9},
publisher={Springer-Verlag},
place={New York},
date={1987},
pages={x+227},
isbn={0-387-96536-X},
}


\bib{hl}{article}{
   author={Hu, Y.},
   author={Liu, Y.},
   title={On equivalence of moduli of smoothness of polynomials in $L_p$,
   $0<p\leq\infty$},
   journal={J. Approx. Theory},
   volume={136},
   date={2005},
   number={2},
   pages={182--197},
}

\bib{sam}{article}{
author={Kopotun, K. A.},
author={Leviatan, D.},
author={Shevchuk, I. A.},
title={On moduli of smoothness with Jacobi weights},
journal={Ukrainian Math. J.},
eprint = {http://arxiv.org/abs/1709.00705},
}

\bib{whit}{article}{
author={Kopotun, K. A.},
author={Leviatan, D.},
author={Shevchuk, I. A.},
title={On weighted approximation with Jacobi weights},
eprint={http://arxiv.org/abs/1710.05059},
}

%
%
%
%
%
%
%
%
%

\end{biblist}
\end{bibsection}

\end{document}